\newtheorem{theorem}{Theorem}
\theoremstyle{plain}
\newtheorem{corollary}{Corollary}
\newtheorem{definition}{Definition}
\newtheorem{lemma}{Lemma}
\newtheorem{proposition}{Proposition}
\newtheorem{remark}{Remark}
\numberwithin{equation}{section}
\begin{document}
\title[On Lazarevi\'{c} and Cusa type inequalities for hyperbolic functions]{%
On Lazarevi\'{c} and Cusa type inequalities for hyperbolic functions with
two parameters}
\author{Zhen-Hang Yang}
\address{Power Supply Service Center, Zhejiang Electric Power Corporation
Research Institute, Hangzhou City, Zhejiang Province, 310009, China}
\email{yzhkm@163.com}
\date{March 31, 2014}
\subjclass[2010]{Primary 26D05, 33B10; Secondary 26A48, 26D158}
\keywords{Hyperbolic functions, inequality, bivariate mean}
\thanks{This paper is in final form and no version of it will be submitted
for publication elsewhere.}

\begin{abstract}
In this paper, by investigating the monotonicity of a function composed of $%
\left( \sinh x\right) /x$ and $\cosh x$ with two parameters in $x$ on $%
\left( 0,\infty \right) $, we prove serval theorems related to inequalities
for hyperbolic functions, which generalize known results and establish some
new and sharp inequalities. As applications, some new and sharp inequalities
for bivariate means are presented.
\end{abstract}

\maketitle

\section{Introduction}

Lazarevi\'{c} \cite{Lazarevic.SMF.170.1966} (or see Mitrinovi\'{c} \cite%
{Mitrinovic.AI.Springer.1970}) proved that for $x\neq 0$, the inequality%
\begin{equation}
\left( \frac{\sinh x}{x}\right) ^{q}>\cosh x  \label{I.L}
\end{equation}%
holds if and only if $q\geq 3$. This result has been generalized by Zhu in 
\cite{Zhu.2009.JIA.379142} as follows.

\noindent \textbf{Theorem Zhu1. }\emph{Let }$p>1$\emph{\ or }$p\leq 8/15$%
\emph{, and }$x\in \left( 0,\infty \right) $\emph{. Then}%
\begin{equation*}
\left( \frac{\sinh x}{x}\right) ^{q}>p+\left( 1-p\right) \cosh x
\end{equation*}%
\emph{if and only if }$q\geq 3\left( 1-p\right) $\emph{.}

Yang gave another generalization and refinement in \cite{Yang.JIA.2013.116}.

\noindent \textbf{Theorem Yang}. \emph{Let }$p,x>0$\emph{. Then the
following inequality }%
\begin{equation}
\frac{\sinh x}{x}>\left( \cosh px\right) ^{1/\left( 3p^{2}\right) }
\label{L-A_pG}
\end{equation}%
\emph{holds for all }$x>0$\emph{\ if and only if}$\ p\geq p_{0}=1/\sqrt{5}$%
\emph{,\ and the function }$p\mapsto \left( \cosh px\right) ^{1/\left(
3p^{2}\right) }$\emph{\ is decreasing on }$\left( 0,\infty \right) $\emph{.
Inequality (\ref{L-A_pG}) is reversed if and only if }$0<p\leq 1/3$\emph{. }

Another inequality related to Lazarevi\'{c} inequality is the so-called Cusa
type one (see \cite{Neuman.MIA.13.4.2010}), which states that%
\begin{equation}
\frac{\sinh x}{x}<\frac{2+\cosh x}{3}  \label{Cusah}
\end{equation}%
holds for $x>0$.

In \cite{Zhu.JIA.2010.130821}, Zhu established a more general result which
contains Lazarevi\'{c} and Cusa-type inequalities.

\noindent \textbf{Theorem Zhu2.} \emph{Let }$x>0$\emph{. Then the following
are considered.}

\emph{(i) If }$p\geq 4/5$\emph{, the double inequality}%
\begin{equation*}
1-\lambda +\lambda \left( \cosh x\right) ^{p}<\left( \frac{\sinh x}{x}%
\right) ^{p}<1-\eta +\eta \left( \cosh x\right) ^{p}
\end{equation*}%
\emph{holds if and only if }$\eta \geq 1/3$\emph{\ and }$\lambda \leq 0$%
\emph{.}

\emph{(ii) If }$p<0$\emph{, the inequality}%
\begin{equation*}
\left( \frac{\sinh x}{x}\right) ^{p}<1-\eta +\eta \left( \cosh x\right) ^{p}
\end{equation*}%
\emph{holds if and only if }$\eta \leq 1/3$\emph{.}

\emph{That is, let }$\alpha >0$\emph{, then the inequality}%
\begin{equation*}
\left( \frac{x}{\sinh x}\right) ^{\alpha }<1-\eta +\eta \left( \frac{1}{%
\cosh x}\right) ^{\alpha }
\end{equation*}%
\emph{holds if and only if }$\eta \leq 1/3$\emph{.}

Other inequalities for hyperbolic functions can be found in \cite%
{Zhu.MIA.11.3.2008}, \cite{Zhu.CMA.58.2009.a}, \cite{Zhu.AAA.2009}, \cite%
{Neuman.MIA.13.4.2010}, \cite{Sandor.arXiv.1105.0859v1.2011}, \cite%
{Yang.JMI.6.4.2012}, \cite{Neuman.AIA.1.1.2012}, \cite{Wu.AML.25.5.2012} 
\cite{Neuman.AMC.218.2012}, \cite{Sandor.MIA.15.2.2012}, \cite%
{Yang.JIA.2013.116}, \cite{Zhu.JIA.2012.303}, \cite{Chen.JMI.8.1.2014}, \cite%
{Yang.AAA.2014.364076}, and references therein.

The aim of this paper is to establish more general than Zhu's inequalities
for hyperbolic functions. In Section 2, we investigate the monotonicity of
the function $H_{p,q}$ defined on $\left( 0,\infty \right) $ by%
\begin{equation}
H_{p,q}\left( x\right) =\frac{U_{p}\left( \frac{\sinh x}{x}\right) }{%
U_{q}\left( \cosh x\right) },  \label{H}
\end{equation}%
where $p,q\in \mathbb{R}$ and $U_{p}$ is defined on $\left( 1,\infty \right) 
$ by%
\begin{equation}
U_{p}\left( t\right) =\frac{t^{p}-1}{p}\text{ if }p\neq 0\text{ and }%
U_{0}\left( t\right) =\ln t.  \label{U_p}
\end{equation}%
If we can prove that $H_{p,q}$ is increasing or decreasing on $\left(
0,\infty \right) $ for certain $p,q$, then we will obtain $H_{p,q}\left(
x\right) >\left( \text{or}<\right) H_{p,q}\left( 0^{+}\right) =1/3$, which
may yield some new inequalities for hyperbolic functions. Our main purpose
in the section is to find the relations between $p$ with $q$ such that $%
H_{p,q}$ has monotonicity property. Based on them, many new sharp
inequalities for hyperbolic functions are derived in Section 3. In the last
section, some new sharp inequalities for bivariate means are presented.

\section{Monotonicity}

We begin with the following simple assertion.

\begin{lemma}
\label{Lemma u_p}Let the function $U_{p}$ defined on $\left( 1,\infty
\right) $ by (\ref{U_p}). Then $p\mapsto U_{p}\left( t\right) $ is
increasing on $\mathbb{R}$ and $U_{p}\left( t\right) >0$ for $t\in \left(
1,\infty \right) $.
\end{lemma}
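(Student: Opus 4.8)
The plan is to fix $t \in (1,\infty)$ and treat $f(p) := U_p(t)$ as a function of the real variable $p$. First I would verify that $f$ is continuous at $p = 0$, since $U_p(t) = (t^p - 1)/p = (e^{p\ln t} - 1)/p \to \ln t = U_0(t)$ as $p \to 0$; this lets me work on $p \neq 0$ and extend by continuity. To see monotonicity, the cleanest route is to write, for $p \neq 0$,
\begin{equation*}
U_p(t) = \frac{t^p - 1}{p} = \int_1^t s^{p-1}\,ds,
\end{equation*}
which is valid and also recovers $U_0(t) = \int_1^t s^{-1}\,ds = \ln t$. Then for fixed $t > 1$ and for $s \in (1,t)$ we have $\ln s > 0$, so $\partial_p(s^{p-1}) = s^{p-1}\ln s > 0$; differentiating under the integral sign (the integrand and its $p$-derivative are continuous on the compact $s$-interval, so this is justified) gives
\begin{equation*}
\frac{d}{dp}U_p(t) = \int_1^t s^{p-1}\ln s\,ds > 0,
\end{equation*}
so $p \mapsto U_p(t)$ is strictly increasing on all of $\mathbb{R}$. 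This simultaneously handles the positivity claim: since $U_p$ is increasing in $p$ and one checks directly that $U_0(t) = \ln t > 0$ for $t > 1$, every $U_p(t)$ with $p \geq 0$ is positive; for $p < 0$ one argues that $U_p(t) = \int_1^t s^{p-1}\,ds > 0$ because the integrand is positive on $(1,t)$ and $t > 1$. (Alternatively, positivity for all $p$ is immediate from the integral representation alone, without monotonicity.)

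An alternative to the integral representation, should one prefer to stay with elementary calculus, is to show directly that $g(p) := t^p - 1 - pU_0(t)\,\cdot(\text{something})$ — more simply, to compute $\frac{d}{dp}\frac{t^p-1}{p}$ by the quotient rule and reduce the sign question to the inequality $e^u(u-1) + 1 > 0$ for $u = p\ln t \neq 0$, which follows since $h(u) = e^u(u-1)+1$ has $h(0)=0$, $h'(u) = ue^u$, so $h$ decreases on $(-\infty,0)$ and increases on $(0,\infty)$, giving $h(u) > 0$ for $u \neq 0$. Either way the content is the same.

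I do not expect a genuine obstacle here — this is the "simple assertion" the authors flag. The only point requiring a word of care is the behavior at $p = 0$: one must note that $U_p$ was defined piecewise and check that the piece $U_0 = \ln t$ is the continuous extension of $(t^p-1)/p$, so that a statement like "increasing on $\mathbb{R}$" is meaningful across the seam. The integral representation $U_p(t) = \int_1^t s^{p-1}\,ds$ makes this transparent in one line and is the approach I would actually write up.
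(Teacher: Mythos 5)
Your proof is correct, and your primary route is genuinely different from the paper's. The paper differentiates $(t^p-1)/p$ in $p$ directly by the quotient rule and writes the derivative as $-\frac{t^p}{p^2}\left(\ln t^{-p}-(t^{-p}-1)\right)$, whose positivity follows from the elementary inequality $\ln x\le x-1$; your ``alternative'' paragraph, reducing the sign to $e^{u}(u-1)+1>0$ with $u=p\ln t$, is exactly this computation in different clothing. For positivity the paper then invokes the monotonicity just proved together with the limit $\lim_{p\to-\infty}U_p(t)=\lim_{p\to-\infty}\frac{t^p-1}{p}=0$ (the word ``decreasing'' there is a slip for ``increasing''), so that $U_p(t)>0$ for every $p$. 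Your integral representation $U_p(t)=\int_1^t s^{p-1}\,ds$ buys both conclusions at once: differentiation under the integral sign gives $\int_1^t s^{p-1}\ln s\,ds>0$ for monotonicity, positivity is immediate from the positive integrand without any limiting argument, and the piecewise seam at $p=0$ disappears because $\int_1^t s^{-1}\,ds=\ln t$ is built into the same formula. This is arguably cleaner and more self-contained than the paper's two-step argument; the paper's version has the minor advantage of needing nothing beyond one-variable calculus and the inequality $\ln x\le x-1$, which it reuses elsewhere (e.g.\ in Remark \ref{Remark T-M}).
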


\begin{proof}
For $p\neq 0$, differentiation yields%
\begin{equation*}
\frac{\partial U_{p}\left( t\right) }{\partial p}=-\frac{1}{p^{2}}\left(
t^{p}-1\right) +\frac{1}{p}t^{p}\ln t=-\frac{t^{p}}{p^{2}}\left( \ln
t^{-p}-\left( t^{-p}-1\right) \right) >0,
\end{equation*}%
where the last inequality holds due to $\ln x\leq \left( x-1\right) $ for $%
x>0$.

Employing the decreasing property, we get%
\begin{equation*}
U_{p}\left( t\right) >\lim_{p\rightarrow -\infty }U_{p}\left( t\right)
=\lim_{p\rightarrow -\infty }\frac{t^{p}-1}{p}=0,
\end{equation*}%
which proves the lemma.
\end{proof}

For $x\in (0,\infty )$, we denote by%
\begin{equation*}
Sh_{p}\left( x\right) :=U_{p}\left( \tfrac{\sinh x}{x}\right) \text{ \ and \ 
}Ch_{p}\left( x\right) :=U_{p}\left( \cosh x\right)
\end{equation*}%
due to $\left( \sinh x\right) /x,\cosh x\in \left( 1,\infty \right) $. Then
we have%
\begin{eqnarray}
Sh_{p}\left( x\right) &=&\frac{\left( \frac{\sinh x}{x}\right) ^{p}-1}{p}%
\text{ if }p\neq 0\text{ \ and \ }Sh_{0}\left( x\right) =\ln \frac{\sinh x}{x%
}\text{ if }p=0,  \label{Sh_p} \\
Ch_{p}\left( x\right) &=&\frac{\cosh ^{p}x-1}{p}\text{ if }p\neq 0\text{ \
and \ }Ch_{0}\left( x\right) =\ln \left( \cosh x\right) \text{ if }p=0.
\label{Ch_p}
\end{eqnarray}%
And then, the function $x\mapsto H_{p,q}\left( x\right) =U_{p}\left( \frac{%
\sinh x}{x}\right) /U_{q}\left( \cosh x\right) =Sh_{p}\left( x\right)
/Ch_{q}\left( x\right) $ can be expressed as%
\begin{equation}
H_{p,q}\left( x\right) =\left\{ 
\begin{array}{ll}
\frac{q}{p}\frac{\left( \frac{\sinh x}{x}\right) ^{p}-1}{\cosh ^{q}x-1} & 
\text{if }pq\neq 0,\bigskip \\ 
\frac{1}{p}\frac{\left( \frac{\sinh x}{x}\right) ^{p}-1}{\ln \left( \cosh
x\right) } & \text{if }p\neq 0,q=0,\bigskip \\ 
q\frac{\ln \frac{\sinh x}{x}}{\cosh ^{q}x-1} & \text{if }p=0,q\neq 0,\bigskip
\\ 
\frac{\ln \frac{\sinh x}{x}}{\ln \left( \cosh x\right) } & \text{if }%
p=q=0.\bigskip%
\end{array}%
\right.  \label{H_p,q}
\end{equation}%
In order to investigate the monotonicity of the function $H_{p,q}$, we first
recall the following lemmas.

\begin{lemma}[\protect\cite{Vamanamurthy.183.1994}, \protect\cite%
{Anderson.New York. 1997}]
\label{Lemma monotonicity of ratio}Let $f,g:\left[ a,b\right] \mapsto 
\mathbb{R}$ be two continuous functions which are differentiable on $\left(
a,b\right) $. Further, let $g^{\prime }\neq 0$ on $\left( a,b\right) $. If $%
f^{\prime }/g^{\prime }$ is increasing (or decreasing) on $\left( a,b\right) 
$, then so are the functions 
\begin{equation*}
x\mapsto \frac{f\left( x\right) -f\left( a\right) }{g\left( x\right)
-g\left( a\right) }\text{ \ \ \ and \ \ \ }x\mapsto \frac{f\left( x\right)
-f\left( b\right) }{g\left( x\right) -g\left( b\right) }.
\end{equation*}
\end{lemma}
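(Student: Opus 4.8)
The plan is to prove the \textquotedblleft increasing\textquotedblright\ version of the statement and then deduce the \textquotedblleft decreasing\textquotedblright\ version by replacing $f$ with $-f$. First I would record a harmless reduction. Since $g^{\prime }$ is a derivative, it has the Darboux (intermediate value) property, so the hypothesis $g^{\prime }\neq 0$ on $\left( a,b\right) $ forces $g^{\prime }$ to have constant sign there; replacing $g$ by $-g$ if necessary — which alters neither of the two quotients — I may assume $g^{\prime }>0$ on $\left( a,b\right) $. Then $g$ is strictly increasing, so $g\left( x\right) -g\left( a\right) >0$ for $x\in \left( a,b\right] $ and $g\left( x\right) -g\left( b\right) <0$ for $x\in \left[ a,b\right) $, and both quotients are well defined and differentiable on $\left( a,b\right) $.

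For the first quotient $\phi \left( x\right) =\left( f\left( x\right) -f\left( a\right) \right) /\left( g\left( x\right) -g\left( a\right) \right) $ I would differentiate and factor the numerator of $\phi ^{\prime }$ as $g^{\prime }\left( x\right) \left( g\left( x\right) -g\left( a\right) \right) \bigl[ f^{\prime }\left( x\right) /g^{\prime }\left( x\right) -\left( f\left( x\right) -f\left( a\right) \right) /\left( g\left( x\right) -g\left( a\right) \right) \bigr] $, so that, the prefactor being positive, the sign of $\phi ^{\prime }\left( x\right) $ equals the sign of the bracket. The crux is then Cauchy's mean value theorem on $\left[ a,x\right] $: there is $\xi \in \left( a,x\right) $ with $\left( f\left( x\right) -f\left( a\right) \right) /\left( g\left( x\right) -g\left( a\right) \right) =f^{\prime }\left( \xi \right) /g^{\prime }\left( \xi \right) $. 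Since $f^{\prime }/g^{\prime }$ is increasing and $\xi <x$, this gives $\left( f\left( x\right) -f\left( a\right) \right) /\left( g\left( x\right) -g\left( a\right) \right) \leq f^{\prime }\left( x\right) /g^{\prime }\left( x\right) $, whence $\phi ^{\prime }\geq 0$ on $\left( a,b\right) $ and $\phi $ is increasing.

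For the second quotient $\psi \left( x\right) =\left( f\left( x\right) -f\left( b\right) \right) /\left( g\left( x\right) -g\left( b\right) \right) $ the computation is the same but the bookkeeping flips twice. The numerator of $\psi ^{\prime }$ factors as $g^{\prime }\left( x\right) \left( g\left( x\right) -g\left( b\right) \right) \bigl[ f^{\prime }\left( x\right) /g^{\prime }\left( x\right) -\left( f\left( x\right) -f\left( b\right) \right) /\left( g\left( x\right) -g\left( b\right) \right) \bigr] $, in which $g\left( x\right) -g\left( b\right) <0$, so the prefactor is now negative. Cauchy's theorem on $\left[ x,b\right] $ yields $\eta \in \left( x,b\right) $ with $\left( f\left( x\right) -f\left( b\right) \right) /\left( g\left( x\right) -g\left( b\right) \right) =\left( f\left( b\right) -f\left( x\right) \right) /\left( g\left( b\right) -g\left( x\right) \right) =f^{\prime }\left( \eta \right) /g^{\prime }\left( \eta \right) \geq f^{\prime }\left( x\right) /g^{\prime }\left( x\right) $, again by monotonicity of $f^{\prime }/g^{\prime }$ but this time with $\eta >x$; hence the bracket is $\leq 0$, and multiplied by the negative prefactor it gives $\psi ^{\prime }\geq 0$, so $\psi $ is increasing. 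If $f^{\prime }/g^{\prime }$ is \emph{strictly} increasing, all the mean value inequalities above are strict, giving strict monotonicity of both quotients; the decreasing case follows on applying what has been proved to $-f$.

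The argument is essentially a packaging of Cauchy's mean value theorem, so I expect no serious obstacle; the only points requiring care are the preliminary sign reductions (constant sign of $g^{\prime }$ via the Darboux property, and the normalization $g\mapsto -g$) and keeping the two sign reversals straight in the computation for $\psi $.
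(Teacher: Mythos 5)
This lemma is the classical ``monotone form of l'H\^opital's rule''; the paper does not prove it but simply cites \cite{Vamanamurthy.183.1994} and \cite{Anderson.New York. 1997}, so there is no in-paper argument to compare against. Your Cauchy-mean-value-theorem proof is the standard one and is correct in substance: the factorization of the numerator of $\phi '$ as $g^{\prime }\left( x\right) \left( g\left( x\right) -g\left( a\right) \right) \left[ f^{\prime }\left( x\right) /g^{\prime }\left( x\right) -\phi \left( x\right) \right] $ is right, the Cauchy MVT comparison with $\xi \in \left( a,x\right) $ (resp.\ $\eta \in \left( x,b\right) $) gives the sign of the bracket, and the two sign reversals for $\psi $ cancel as you say. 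One small slip in the preliminary normalization: replacing $g$ alone by $-g$ does \emph{not} leave the two quotients unchanged --- it negates both of them and also turns an increasing $f^{\prime }/g^{\prime }$ into a decreasing $-f^{\prime }/g^{\prime }$. The correct reduction is to replace \emph{both} $f$ and $g$ by $-f$ and $-g$ (which preserves the quotients and preserves $f^{\prime }/g^{\prime }$), or simply to carry the case $g^{\prime }<0$ through the same computation, where the prefactor signs flip but so does the sign of $g\left( x\right) -g\left( a\right) $, so the conclusion is unaffected. This is a one-line fix and does not damage the argument; everything else, including the strict version and the passage to the decreasing case via $f\mapsto -f$, is fine.
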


\begin{lemma}[\protect\cite{Biernacki.9.1955}]
\label{Lemma monotocity of A/B}Let $a_{n}$ and $b_{n}$ $(n=0,1,2,...)$ be
real numbers and let the power series $A\left( t\right) =\sum_{n=0}^{\infty
}a_{n}t^{n}$ and $B\left( t\right) =\sum_{n=0}^{\infty }b_{n}t^{n}$ be
convergent for $|t|<R$. If $b_{n}>0$ for $n=0,1,2,...$, and $a_{n}/b_{n}$ is
strictly increasing (or decreasing) for $n=0,1,2,...$, then the function $%
A\left( t\right) /B\left( t\right) $ is strictly increasing (or decreasing)
on $\left( 0,R\right) $.
\end{lemma}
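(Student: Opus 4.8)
The plan is to reduce the claimed monotonicity to the sign of a single derivative and then to read that sign off from the Maclaurin coefficients of an auxiliary power series; this is the classical argument of Biernacki and Krzy\.{z}. Since $b_{n}>0$ for every $n$, we have $B(t)\ge b_{0}>0$ for $t\in[0,R)$, so $A/B$ is differentiable on $(0,R)$ and
\begin{equation*}
\left(\frac{A}{B}\right)^{\prime}(t)=\frac{A^{\prime}(t)B(t)-A(t)B^{\prime}(t)}{B(t)^{2}}
\end{equation*}
has the same sign as $D(t):=A^{\prime}(t)B(t)-A(t)B^{\prime}(t)$. Hence it suffices to prove $D(t)>0$ on $(0,R)$ in the increasing case (and $D(t)<0$ in the decreasing case).

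The key step is to expand $D$ as a power series. Since $A$ and $B$, and therefore $D$, are analytic on $|t|<R$, a short index manipulation in the Cauchy products shows that the coefficient of $t^{k}$ in $D$ equals, with $j:=k+1$,
\begin{equation*}
\sum_{n=0}^{j}(2n-j)\,a_{n}b_{j-n}.
\end{equation*}
Pairing the summand indexed by $n$ with the one indexed by $j-n$ (the middle summand vanishing when $j$ is even) rewrites this as
\begin{equation*}
\sum_{2n<j}(2n-j)\bigl(a_{n}b_{j-n}-a_{j-n}b_{n}\bigr).
\end{equation*}
For $2n<j$ one has $2n-j<0$, while strict monotonicity of $a_{m}/b_{m}$ together with $b_{m}>0$ gives $a_{n}/b_{n}<a_{j-n}/b_{j-n}$, hence $a_{n}b_{j-n}-a_{j-n}b_{n}<0$; thus every paired term is positive and each coefficient of $t^{k}$ in $D$ is nonnegative.

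Finally, the constant term of $D$ (the case $k=0$, $j=1$) is precisely $a_{1}b_{0}-a_{0}b_{1}$, which is strictly positive because $a_{0}/b_{0}<a_{1}/b_{1}$. Therefore $D(t)\ge a_{1}b_{0}-a_{0}b_{1}>0$ for all $t\in[0,R)$, so $(A/B)^{\prime}>0$ on $(0,R)$ and $A/B$ is strictly increasing there; the decreasing case follows by reversing every inequality. The only genuinely delicate point is the bookkeeping in the middle paragraph: obtaining the coefficient identity $\sum_{n}(2n-j)a_{n}b_{j-n}$ and recognizing that the involution $n\leftrightarrow j-n$ is exactly what converts the hypothesis ``$a_{n}/b_{n}$ monotone'' into ``the coefficients of $D$ have a fixed sign''. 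No convergence issue arises, since $D$ inherits analyticity on $|t|<R$ from $A$ and $B$.
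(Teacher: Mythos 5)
Your proof is correct: the coefficient identity $\sum_{n=0}^{j}(2n-j)a_{n}b_{j-n}$ for the $t^{j-1}$-coefficient of $A'B-AB'$ checks out, the pairing $n\leftrightarrow j-n$ does convert the monotonicity of $a_{n}/b_{n}$ (together with $b_{n}>0$) into positivity of every coefficient, and the strictly positive constant term $a_{1}b_{0}-a_{0}b_{1}$ gives the strict inequality $D(t)>0$ on $(0,R)$. Note that the paper does not prove this lemma at all --- it is quoted from Biernacki and Krzy\.{z} \cite{Biernacki.9.1955} as a known tool --- so your argument supplies a self-contained proof of the cited result, and it is essentially the classical one.
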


Now we are in position to prove the monotonicity of $H_{p,q}$. Clearly, $%
H_{p,q}\left( x\right) $ can be written as%
\begin{equation*}
H_{p,q}\left( x\right) =\frac{Sh_{p}\left( x\right) }{Ch_{q}\left( x\right) }%
=\frac{Sh_{p}\left( x\right) -Sh_{p}\left( 0^{+}\right) }{Ch_{q}\left(
x\right) -Ch_{q}\left( 0^{+}\right) }.
\end{equation*}%
For $pq\neq 0$, differentiation yields%
\begin{equation}
\frac{Sh_{p}^{\prime }\left( x\right) }{Ch_{q}^{\prime }\left( x\right) }=%
\frac{\cosh ^{1-q}x}{x^{2}\sinh x}\left( \frac{\sinh x}{x}\right)
^{p-1}\left( x\cosh x-\sinh x\right) :=f_{1}(x)  \label{f1}
\end{equation}%
\begin{equation}
f_{1}^{\prime }(x)=\frac{1}{x^{2}\sinh ^{3}x\cosh ^{q}x}\left( \frac{\sin x}{%
x}\right) ^{p}\times f_{2}\left( x\right) ,  \label{df1}
\end{equation}%
where%
\begin{equation}
f_{2}\left( x\right) =pA\left( x\right) -qB\left( x\right) +C\left( x\right)
,  \label{f2}
\end{equation}%
in which 
\begin{subequations}
\begin{align}
A\left( x\right) & =\left( \sinh x-x\cosh x\right) ^{2}\cosh x>0,  \label{A}
\\
B\left( x\right) & =x\left( x\cosh x-\sinh x\right) \sinh ^{2}x>0  \label{B}
\\
C\left( x\right) & =-2x^{2}\cosh x+x\sinh x+\cosh x\sinh ^{2}x>0,  \label{C}
\end{align}%
here $C\left( x\right) >0$ due to 
\end{subequations}
\begin{equation*}
C\left( x\right) =x^{2}\left( \cosh x\right) \left( \frac{\sinh ^{2}x}{x^{2}}%
+\frac{\tanh x}{x}-2\right) >0
\end{equation*}%
by Wilker type inequality (see \cite{Zhu.MIA.10.4.2007}). It is easy to
verify that (\ref{f1}), (\ref{df1}) and (\ref{f2}) are true for $pq=0$.

Expanding in power series yields%
\begin{eqnarray}
A\left( x\right) &=&\frac{1}{4}x^{2}\cosh 3x+\frac{3}{4}x^{2}\cosh x-\frac{1%
}{2}x\sinh 3x-\frac{1}{2}x\sinh x+\frac{1}{4}\cosh 3x-\frac{1}{4}\cosh x 
\notag \\
&=&\frac{1}{4}\sum_{n=0}^{\infty }\frac{3^{2n}}{\left( 2n\right) !}x^{2n+2}+%
\frac{3}{4}\sum_{n=0}^{\infty }\frac{1}{\left( 2n\right) !}x^{2n+2}-\frac{1}{%
2}\sum_{n=1}^{\infty }\frac{3^{2n-1}}{\left( 2n-1\right) !}x^{2n}  \notag \\
&&-\frac{1}{2}\sum_{n=1}^{\infty }\frac{1}{\left( 2n-1\right) !}x^{2n}+\frac{%
1}{4}\sum_{n=0}^{\infty }\frac{3^{2n}}{\left( 2n\right) !}x^{2n}-\frac{1}{4}%
\sum_{n=0}^{\infty }\frac{1}{\left( 2n\right) !}x^{2n}  \notag \\
&=&\sum_{n=3}^{\infty }\frac{\left( \left( 4n^{2}-14n+9\right)
9^{n-1}+12n^{2}-10n-1\right) }{4\left( 2n\right) !}x^{2n}:=\sum_{n=3}^{%
\infty }\frac{a_{n}}{4\left( 2n\right) !}x^{2n},  \label{Ae}
\end{eqnarray}%
\begin{eqnarray}
B\left( x\right) &=&\frac{1}{4}x^{2}\cosh 3x-\frac{1}{4}x^{2}\cosh x-\frac{1%
}{4}x\sinh 3x+\frac{3}{4}x\sinh x  \notag \\
&=&\frac{1}{4}\sum_{n=0}^{\infty }\frac{3^{2n}}{\left( 2n\right) !}x^{2n+2}-%
\frac{1}{4}\sum_{n=0}^{\infty }\frac{1}{\left( 2n\right) !}x^{2n+2}  \notag
\\
&&-\frac{1}{4}\sum_{n=1}^{\infty }\frac{3^{2n-1}}{\left( 2n-1\right) !}%
x^{2n}+\frac{3}{4}\sum_{n=1}^{\infty }\frac{1}{\left( 2n-1\right) !}x^{2n} 
\notag \\
&=&\sum_{n=3}^{\infty }\frac{\left( 4n\left( n-2\right) 9^{n-1}-4n\left(
n-2\right) \right) }{4\left( 2n\right) !}x^{2n}:=\sum_{n=3}^{\infty }\frac{%
b_{n}}{4\left( 2n\right) !}x^{2n},  \label{Be}
\end{eqnarray}%
\begin{eqnarray*}
C\left( x\right) &=&-2x^{2}\cosh x+x\sinh x+\frac{1}{4}\cosh 3x-\frac{1}{4}%
\cosh x \\
&=&-2\sum_{n=0}^{\infty }\frac{1}{\left( 2n\right) !}x^{2n+2}+\sum_{n=1}^{%
\infty }\frac{1}{\left( 2n-1\right) !}x^{2n}+\frac{1}{4}\sum_{n=0}^{\infty }%
\frac{3^{2n}}{\left( 2n\right) !}x^{2n}-\frac{1}{4}\sum_{n=0}^{\infty }\frac{%
1}{\left( 2n\right) !}x^{2n}
\end{eqnarray*}%
\begin{equation}
=\sum_{n=3}^{\infty }\frac{\left( 9^{n}-32n^{2}+24n-1\right) }{4\left(
2n\right) !}x^{2n}:=\sum_{n=3}^{\infty }\frac{c_{n}}{4\left( 2n\right) !}%
x^{2n}.  \label{Ce}
\end{equation}

We see clearly that, by Lemma \ref{Lemma monotonicity of ratio}, if we can
prove $f_{2}\left( x\right) \geq (\leq )0$ for all $x\in (0,\infty )$ then $%
H_{p,q}$ defined by (\ref{H_p,q}) is increasing (decreasing) on $(0,\infty )$%
. To this end, we need to prove the following important statement.

\begin{lemma}
\label{Lemma f3}Let $f_{3}$ be defined on $(0,\infty )$ by%
\begin{equation}
f_{3}\left( x\right) =\frac{pA\left( x\right) -qB\left( x\right) }{C\left(
x\right) }+1.  \label{f3}
\end{equation}%
where $A\left( x\right) ,B\left( x\right) $ and $C\left( x\right) $ are
defined by (\ref{A}), (\ref{B}) and (\ref{C}), respectively. Then

(i) $f_{3}$ is strictly increasing on $(0,\infty )$ if $\left( p,q\right)
\in \mathbb{I}_{1}$, where 
\begin{equation}
\mathbb{I}_{1}=\left\{ q=0,p>0\right\} \cup \left\{ q>0,\tfrac{p}{q}\geq 
\tfrac{23}{17}\right\} \cup \left\{ q<0,\tfrac{p}{q}\leq 1\right\} ,
\label{I1}
\end{equation}%
and we have%
\begin{equation*}
\frac{5}{8}p-\frac{15}{8}q+1<f_{3}\left( x\right) <\infty ;
\end{equation*}

(ii) $f_{3}$ is strictly decreasing on $(0,\infty )$ if $\left( p,q\right)
\in \mathbb{I}_{2}$, where 
\begin{equation}
\mathbb{I}_{2}=\left\{ q=0,p<0\right\} \cup \left\{ q>0,\tfrac{p}{q}\leq
1\right\} \cup \left\{ q<0,\tfrac{p}{q}\geq \tfrac{23}{17}\right\} ,
\label{I2}
\end{equation}%
and we have%
\begin{equation*}
-\infty <f_{3}\left( x\right) <\frac{5}{8}p-\frac{15}{8}q+1.
\end{equation*}
\end{lemma}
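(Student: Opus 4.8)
The plan is to combine the expansions (\ref{Ae})--(\ref{Ce}) with the Biernacki-type rule of Lemma~\ref{Lemma monotocity of A/B}. Write $f_{3}=\frac{pA-qB}{C}+1=\frac{f_{2}}{C}$ with $f_{2}=pA-qB+C$. Both $f_{2}$ and $C$ are even power series whose lowest-order term is $x^{6}$, so cancelling $x^{6}$ and putting $t=x^{2}$ turns $f_{3}$ into $\bigl(\sum_{m\ge 0}\tilde a_{m}t^{m}\bigr)/\bigl(\sum_{m\ge 0}\tilde c_{m}t^{m}\bigr)$ with $\tilde a_{m}\propto pa_{m+3}-qb_{m+3}+c_{m+3}$ and $\tilde c_{m}\propto c_{m+3}$. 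The (easy) first step is that $c_{n}>0$ for all $n\ge 3$, which follows from $c_{n}=9^{n}-(32n^{2}-24n+1)$ and $9^{n}>32n^{2}$ $(n\ge 3)$; hence all denominator coefficients are positive and Lemma~\ref{Lemma monotocity of A/B} applies (the series converge for all $t$, and $x\mapsto x^{2}$ is increasing on $(0,\infty)$). Thus it suffices to prove that the sequence $w_{n}:=1+\dfrac{pa_{n}-qb_{n}}{c_{n}}$ $(n\ge 3)$ is non-decreasing, and not eventually constant, when $(p,q)\in\mathbb{I}_{1}$, and non-increasing, not eventually constant, when $(p,q)\in\mathbb{I}_{2}$. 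Since $c_{n}>0$, the sign of $w_{n+1}-w_{n}$ equals that of $pD_{n}^{A}-qD_{n}^{B}$, where $D_{n}^{A}:=a_{n+1}c_{n}-a_{n}c_{n+1}$ and $D_{n}^{B}:=b_{n+1}c_{n}-b_{n}c_{n+1}$.

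The crux is the chain of inequalities
\[
0<D_{n}^{A}<D_{n}^{B}\le\tfrac{23}{17}D_{n}^{A}\qquad (n\ge 3),
\]
the rightmost being an equality exactly at $n=3$. Granting this, each piece of $\mathbb{I}_{1}$ forces $pD_{n}^{A}-qD_{n}^{B}\ge 0$: for $q=0,\ p>0$ it is $pD_{n}^{A}>0$; for $q>0$ write $pD_{n}^{A}-qD_{n}^{B}=qD_{n}^{A}\bigl(\tfrac pq-\tfrac{D_{n}^{B}}{D_{n}^{A}}\bigr)$ and use $\tfrac pq\ge\tfrac{23}{17}\ge\tfrac{D_{n}^{B}}{D_{n}^{A}}$; for $q<0$ use $\tfrac pq\le 1<\tfrac{D_{n}^{B}}{D_{n}^{A}}$, so the bracket is negative while $qD_{n}^{A}<0$. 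The three pieces of $\mathbb{I}_{2}$ are handled symmetrically (using $p<0$; or $q>0$ with $p/q\le 1<D_{n}^{B}/D_{n}^{A}$; or $q<0$ with $p/q\ge\tfrac{23}{17}\ge D_{n}^{B}/D_{n}^{A}$), giving $pD_{n}^{A}-qD_{n}^{B}\le 0$. In every case the inequality is strict for all $n\ge 3$, except possibly the single equality $w_{3}=w_{4}$ permitted on the boundary $p/q=\tfrac{23}{17}$; but $w_{n}\to\pm\infty$ (from $a_{n},b_{n}\sim\tfrac{4n^{2}}{9}9^{n}$, $c_{n}\sim 9^{n}$ one gets $w_{n}\sim\tfrac{4(p-q)}{9}n^{2}$, and $w_{n}\sim-\tfrac{2p}{3}n$ if $p=q$), so $w_{n}$ is not eventually constant and the usual strengthening of Lemma~\ref{Lemma monotocity of A/B} still yields \emph{strict} monotonicity of $f_{3}$.

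To prove the displayed inequalities I would substitute $a_{n}=(4n^{2}-14n+9)9^{n-1}+(12n^{2}-10n-1)$, $b_{n}=4n(n-2)(9^{n-1}-1)$, $c_{n}=9\cdot 9^{n-1}-(32n^{2}-24n+1)$, set $s=9^{n-1}$, and expand each of $D_{n}^{A}$, $D_{n}^{B}-D_{n}^{A}$, $23D_{n}^{A}-17D_{n}^{B}$ as a quadratic in $s$ with polynomial-in-$n$ coefficients. In each case the coefficient of $s^{2}=9^{2n-2}$ is $81$ times the first difference of the governing quadratic: $81(8n-10)$ for $D_{n}^{A}$, the constant $486$ for $D_{n}^{B}-D_{n}^{A}$, and $81(48n-162)=486(8n-27)$ for $23D_{n}^{A}-17D_{n}^{B}$. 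The first two are positive for all $n\ge 3$; the third is positive for $n\ge 4$, and at $n=3$ the entire expression vanishes identically — which is precisely where the value $\tfrac{23}{17}$ enters, for then $D_{3}^{B}/D_{3}^{A}=46/34=\tfrac{23}{17}$ (with $D_{3}^{A}=2^{17}\cdot 34$, $D_{3}^{B}=2^{17}\cdot 46$). Since $9^{2n-2}$ overwhelms the remaining $(\text{polynomial})\cdot 9^{n-1}+(\text{polynomial})$, the sign of the $s^{2}$-coefficient fixes the sign of the whole quadratic for all $n$ beyond a small bound, and the handful of remaining small values (including $n=3$) are settled by direct computation. (The other limit, $D_{n}^{B}/D_{n}^{A}\to 1$, concerns the sharpness of $\mathbb{I}_{1},\mathbb{I}_{2}$ but is not needed for the lemma.)

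For the bounds on $f_{3}$: the expansions give $\lim_{x\to 0^{+}}f_{3}(x)=\frac{pa_{3}-qb_{3}+c_{3}}{c_{3}}=1+\frac{320p-960q}{512}=\frac{5}{8}p-\frac{15}{8}q+1$, so strict monotonicity immediately yields $f_{3}(x)>\frac{5}{8}p-\frac{15}{8}q+1$ in case (i) and $f_{3}(x)<\frac{5}{8}p-\frac{15}{8}q+1$ in case (ii). For the remaining endpoints, the leading asymptotics $A(x)\sim\frac{1}{8}(1-x)^{2}e^{3x}$, $B(x)\sim\frac{1}{8}x(x-1)e^{3x}$, $C(x)\sim\frac{1}{8}e^{3x}$ as $x\to\infty$ give $f_{3}(x)=(p-q)x^{2}+(q-2p)x+(p+1)+o(1)$, and a short case check over $\mathbb{I}_{1}$ and $\mathbb{I}_{2}$ shows this tends to $+\infty$, resp. $-\infty$. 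The main obstacle is clearly the middle step: making ``$9^{2n-2}$ dominates'' rigorous uniformly in $n\ge 3$, while pinning down that the two threshold ratios are exactly $1$ and $\tfrac{23}{17}$.
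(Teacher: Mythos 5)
Your proposal is correct in substance and follows the same skeleton as the paper: expand $pA-qB$ and $C$ in even power series, reduce to the monotonicity of the coefficient ratio via Lemma~\ref{Lemma monotocity of A/B}, and observe that everything hinges on where the ratio $D_n^B/D_n^A$ (the paper's $u_n/v_n$, with $u_n=b_{n+1}c_n-b_nc_{n+1}$, $v_n=a_{n+1}c_n-a_nc_{n+1}$) sits relative to $p/q$. Where you genuinely diverge is in how the key bound $1<u_n/v_n\le \tfrac{23}{17}$ is obtained. The paper proves that $u_n/v_n$ is \emph{decreasing} in $n$ by showing $u_nv_{n+1}-u_{n+1}v_n>0$ (after factoring, a cubic in $9^n$ handled by binomial expansion), and then reads off the two bounds from $\lim_{n\to\infty}u_n/v_n=1$ and $u_3/v_3=23/17$. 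You instead prove the two-sided bound directly, by sign analysis of the three quadratics in $s=9^{n-1}$ given by $D_n^A$, $D_n^B-D_n^A$ and $23D_n^A-17D_n^B$; your leading coefficients $81(8n-10)$, $486$ and $486(8n-27)$ are correct, as are the values $D_3^A=2^{17}\cdot 34$, $D_3^B=2^{17}\cdot 46$. Your route avoids the paper's hardest computation (the cubic expression $w_n$ in the paper's notation) at the cost of three separate but lower-degree dominance estimates; it loses the monotonicity of $u_n/v_n$, which the lemma does not actually need. Two remarks: (1) the ``$9^{2n-2}$ dominates'' step you flag as the main obstacle is routine and is exactly what the paper does elsewhere via $(1+8)^n>1+8n+\binom n2 8^2+\cdots$, so it is a matter of bookkeeping, not of ideas; (2) you are in fact more careful than the paper at the boundary $p/q=23/17$, where the coefficient ratio is only non-strictly monotone because $w_3=w_4$, and the strict monotonicity of $f_3$ requires the (standard, and correctly invoked) non-constancy strengthening of the Biernacki lemma — a point the paper glosses over. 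Your endpoint computations ($\lim_{x\to 0^+}f_3=\tfrac58p-\tfrac{15}8q+1$ from the $n=3$ coefficients, and the asymptotic $f_3(x)\sim(p-q)x^2+(q-2p)x+p+1$ as $x\to\infty$) are both correct and the latter is more explicit than the paper's bare assertion of $\pm\infty$.
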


\begin{proof}
Using (\ref{Ae}), (\ref{Be}) and (\ref{Ce}) gives 
\begin{equation*}
f_{3}\left( x\right) -1=\frac{pA\left( x\right) -qB\left( x\right) }{C\left(
x\right) }=\dfrac{\sum_{n=3}^{\infty }\frac{\left( pa_{n}-qb_{n}\right) }{%
4\left( 2n\right) !}x^{2n}}{\sum_{n=3}^{\infty }\frac{c_{n}}{4\left(
2n\right) !}x^{2n}},
\end{equation*}%
where%
\begin{eqnarray}
a_{n} &=&\left( \left( 4n^{2}-14n+9\right) 9^{n-1}+12n^{2}-10n-1\right) ,
\label{a_n} \\
b_{n} &=&\left( 4n\left( n-2\right) 9^{n-1}-4n\left( n-2\right) \right) ,
\label{b_n} \\
c_{n} &=&\left( 9^{n}-32n^{2}+24n-1\right) .  \label{c_n}
\end{eqnarray}%
In order to observe the monotonicity of $f_{3}$, we need to investigate the
monotonicity of series 
\begin{equation*}
\frac{\left( pa_{n}-qb_{n}\right) /\left( 4\left( 2n\right) !\right) }{%
c_{n}/\left( 4\left( 2n\right) !\right) }=\frac{pa_{n}-qb_{n}}{c_{n}}.
\end{equation*}%
We have%
\begin{eqnarray*}
&&\frac{pa_{n+1}-qb_{n+1}}{c_{n+1}}-\frac{pa_{n}-qb_{n}}{c_{n}} \\
&=&\frac{p\left( a_{n+1}c_{n}-a_{n}c_{n+1}\right) -q\left(
b_{n+1}c_{n}-b_{n}c_{n+1}\right) }{c_{n}c_{n+1}} \\
&=&\frac{pv_{n}-qu_{n}}{c_{n}c_{n+1}}=\left\{ 
\begin{array}{cc}
p\frac{v_{n}}{c_{n}c_{n+1}} & \text{if }q=0, \\ 
\dfrac{v_{n}}{c_{n}c_{n+1}}q\left( \dfrac{p}{q}-\dfrac{u_{n}}{v_{n}}\right)
& \text{if }q\neq 0,%
\end{array}%
\right.
\end{eqnarray*}%
where%
\begin{eqnarray*}
u_{n} &=&b_{n+1}c_{n}-b_{n}c_{n+1} \\
&=&2\times 9^{n-1}\left[ \left( 36n-18\right) 9^{n}-\left(
512n^{4}-384n^{3}-560n^{2}+792n-36\right) \right] \\
&&+4\left( 42n+40n^{2}-1\right) ,
\end{eqnarray*}%
\begin{eqnarray*}
v_{n} &=&a_{n+1}c_{n}-a_{n}c_{n+1} \\
&=&2\times 9^{n-1}\left[ \left( 36n-45\right) 9^{2n}-\left(
512n^{4}-1152n^{3}+1072n^{2}\right) \right] \\
&&+2\times \left[ 2\left( 28n+5\right) 9^{n}-\left( 16n^{2}+60n+5\right) %
\right] \\
&:&=2\times 9^{n-1}v_{n}^{\prime }+2v_{n}^{\prime \prime }.
\end{eqnarray*}%
Now we distinguish three cases to discuss the monotonicity of $f_{3}$.

(i) When $q=0$, we have $c_{n},v_{n}>0$ for $n\geq 3$. Indeed, we use
binomial expansion to get%
\begin{eqnarray*}
c_{n} &=&9^{n}-32n^{2}+24n-1=\left( 1+8\right) ^{n}-32n^{2}+24n-1 \\
&>&1+8n+\tfrac{n\left( n-1\right) }{2}8^{2}-32n^{2}+24n-1=0.
\end{eqnarray*}%
Application of binomial expansion again we have%
\begin{eqnarray*}
v_{n}^{\prime } &=&\left( 36n-45\right) \left( 1+8\right) ^{n}-\left(
512n^{4}-1152n^{3}+1072n^{2}\right) \\
&>&\left( 36n-45\right) \left( 1+8n+\tfrac{n\left( n-1\right) }{2}8^{2}+%
\tfrac{n\left( n-1\right) \left( n-2\right) }{6}8^{3}\right) -\left(
512n^{4}-1152n^{3}+1072n^{2}\right) \\
&=&2560n^{4}-10752n^{3}+14288n^{2}-6564n-45 \\
&=&2560\left( n-3\right) ^{4}+19968\left( n-3\right) ^{3}+55760\left(
n-3\right) ^{2}+65340\left( n-3\right) +25911>0,
\end{eqnarray*}%
\begin{eqnarray*}
v_{n}^{\prime \prime } &=&2\left( 28n+5\right) \left( 1+8\right) ^{n}-\left(
16n^{2}+60n+5\right) \\
&>&2\left( 28n+5\right) \left( 1+8n\right) -\left( 16n^{2}+60n+5\right) \\
&=&432n^{2}+76n+5>0
\end{eqnarray*}%
which show that $v_{n}=v_{n}^{\prime }+v_{n}^{\prime \prime }>0$ for $n\geq
3 $. Thus, $\left( pa_{n}-qb_{n}\right) /c_{n}$ is increasing if $p\geq 0$
and decreasing if $p<0$, and by Lemma \ref{Lemma monotocity of A/B} so is $%
f_{3}-1$ on $(0,\infty )$. Hence, we have%
\begin{eqnarray*}
\frac{5}{8}p+1 &=&\lim_{x\rightarrow 0^{+}}f_{3}\left( x\right) <f_{3}\left(
x\right) <\lim_{x\rightarrow \infty }f_{3}\left( x\right) =\infty \text{ if }%
p>0, \\
f_{3}\left( x\right) &=&1\text{ if }p=0, \\
-\infty &=&\lim_{x\rightarrow \infty }f_{3}\left( x\right) <f_{3}\left(
x\right) <\lim_{x\rightarrow 0^{+}}f_{3}\left( x\right) =\frac{5}{8}p+1\text{
if }p<0.
\end{eqnarray*}

(ii) When $q\neq 0$, we claim that $u_{n}/v_{n}$ is decreasing for $n\geq 3$%
. Since $v_{n}>0$ for $n\geq 3$, it suffices to show that $%
u_{n}v_{n+1}-u_{n+1}v_{n}>0$. Factoring and arranging give us to%
\begin{eqnarray*}
\frac{u_{n}v_{n+1}-u_{n+1}v_{n}}{c_{n+1}} &=&a_{n+2}\left(
c_{n}b_{n+1}-b_{n}c_{n+1}\right) +b_{n+2}\left(
a_{n}c_{n+1}-c_{n}a_{n+1}\right) \\
+c_{n+2}\left( b_{n}a_{n+1}-a_{n}b_{n+1}\right) &=&\frac{16}{3}w_{n},
\end{eqnarray*}%
where%
\begin{eqnarray}
w_{n} &=&9^{3n+2}-\left( 1024n^{4}-2560n^{3}+2752n^{2}+243\right) 9^{2n} 
\notag \\
&&+\left( 1024n^{4}+2560n^{3}+2752n^{2}+243\right) 9^{n}-81.  \label{w_n}
\end{eqnarray}%
As shown previously, $c_{n+1}>0$ for $n\geq 3$, and we only need to prove $%
w_{n}>0$ for $n\geq 3$. Since the sum of the third and fourth terms in (\ref%
{w_n}) is obviously positive, and it suffices to show that the sum of the
first and second is also positive. Using binomial expansion again, we have%
\begin{eqnarray*}
9^{-2n}w_{n} &>&9^{n+2}-\left( 1024n^{4}-2560n^{3}+2752n^{2}+243\right) , \\
&>&1+8(n+2)+\tfrac{\left( n+2\right) \left( n+1\right) }{2}8^{2}+\tfrac{%
\left( n+2\right) \left( n+1\right) n}{6}8^{3}+\tfrac{\left( n+2\right)
\left( n+1\right) n\left( n-1\right) }{24}8^{4} \\
&&+\tfrac{\left( n+2\right) \left( n+1\right) n\left( n-1\right) \left(
n-2\right) }{120}8^{5}-\left( 1024n^{4}-2560n^{3}+2752n^{2}+243\right) \\
&=&\frac{2}{15}\left(
2048n^{5}-6400n^{4}+12160n^{3}-19\,760n^{2}+7692n-1215\right) \\
&=&2048\left( n-3\right) ^{5}+24320\left( n-3\right) ^{4}+119680\left(
n-3\right) ^{3} \\
&&+297040\left( n-3\right) ^{2}+355692\left( n-3\right) +151605 \\
&>&0,
\end{eqnarray*}%
which proves the decreasing property of $u_{n}/v_{n}$ for $n\geq 3$. It
follows that%
\begin{equation*}
1=\lim_{n\rightarrow \infty }\dfrac{u_{n}}{v_{n}}<\dfrac{u_{n}}{v_{n}}\leq 
\frac{u_{3}}{v_{3}}=\frac{23}{17},
\end{equation*}%
and then, we conclude that%
\begin{equation*}
\tfrac{pa_{n+1}-qb_{n+1}}{c_{n+1}}-\tfrac{pa_{n}-qb_{n}}{c_{n}}=\dfrac{v_{n}%
}{c_{n}c_{n+1}}q\left( \tfrac{p}{q}-\tfrac{u_{n}}{v_{n}}\right) \left\{ 
\begin{array}{ll}
>0\bigskip & \text{if }q>0,\frac{p}{q}\geq \frac{23}{17}, \\ 
<0\bigskip & \text{if }q<0,\frac{p}{q}\geq \frac{23}{17}, \\ 
<0\bigskip & \text{if }q>0,\frac{p}{q}\leq 1, \\ 
>0\bigskip & \text{if }q<0,\frac{p}{q}\leq 1,%
\end{array}%
\right.
\end{equation*}%
which by Lemma \ref{Lemma monotocity of A/B} yield the desired monotonicity
results. And, easy calculations gives%
\begin{eqnarray*}
\lim_{x\rightarrow 0^{+}}f_{3}\left( x\right) -1 &=&\frac{5}{8}p-\frac{15}{8}%
q, \\
\lim_{x\rightarrow \infty }f_{3}\left( x\right) -1 &=&\left\{ 
\begin{array}{cc}
\infty \bigskip & \text{if }q>0,\frac{p}{q}\geq \frac{23}{17}\text{ or }q<0,%
\frac{p}{q}\leq 1, \\ 
-\infty & \text{if }q<0,\frac{p}{q}\geq \frac{23}{17}\text{ or }q>0,\frac{p}{%
q}\leq 1.%
\end{array}%
\right.
\end{eqnarray*}%
Thus the proof is completed
\end{proof}

From the lemma above, we easily get the monotonicity of $H_{p,q}$.

\begin{proposition}
\label{P main1}Let $H_{p,q}$ be defined on $(0,\infty $ by (\ref{H_p,q}).
Then

(i) $H_{p,q}$ is increasing on $(0,\infty )$ if%
\begin{equation*}
\left( p,q\right) \in (\mathbb{I}_{1}\cup \left\{ \left( 0,0\right) \right\}
)\cap \left\{ \tfrac{5}{8}p-\tfrac{15}{8}q+1\geq 0\right\} ,
\end{equation*}%
where $\mathbb{I}_{1}$ is defined by (\ref{I1});

(ii) $H_{p,q}$ is decreasing on $(0,\infty )$ if%
\begin{equation*}
\left( p,q\right) \in \mathbb{I}_{2}\cap \left\{ \tfrac{5}{8}p-\tfrac{15}{8}%
q+1\leq 0\right\} ,
\end{equation*}%
where $\mathbb{I}_{2}$ is defined by (\ref{I2}).
\end{proposition}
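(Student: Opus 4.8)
The plan is to reduce everything to the sign of $f_{3}$ on $(0,\infty)$ and then invoke Lemma~\ref{Lemma f3} together with the monotone form of L'Hôpital's rule. First I would record that $Sh_{p}(0^{+})=Ch_{q}(0^{+})=0$ (since $(\sinh x)/x\to 1$ and $\cosh x\to 1$ as $x\to 0^{+}$), so that
\begin{equation*}
H_{p,q}(x)=\frac{Sh_{p}(x)}{Ch_{q}(x)}=\frac{Sh_{p}(x)-Sh_{p}(0^{+})}{Ch_{q}(x)-Ch_{q}(0^{+})},
\end{equation*}
and that $Ch_{q}^{\prime}(x)=\cosh^{q-1}x\,\sinh x>0$ on $(0,\infty)$ (with $Ch_{0}^{\prime}(x)=\tanh x>0$ when $q=0$). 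Lemma~\ref{Lemma monotonicity of ratio} then says that $H_{p,q}$ inherits the monotonicity of $f_{1}=Sh_{p}^{\prime}/Ch_{q}^{\prime}$, so it suffices to determine the sign of $f_{1}^{\prime}$ on $(0,\infty)$.

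Next I would use the factorization in (\ref{df1}): the factor multiplying $f_{2}(x)$ there is strictly positive on $(0,\infty)$, so $f_{1}^{\prime}$ has the same sign as $f_{2}=pA-qB+C$; and since $C(x)>0$ and $f_{2}=C\cdot f_{3}$ by the very definition (\ref{f3}) of $f_{3}$, the sign of $f_{1}^{\prime}$ on $(0,\infty)$ is exactly the sign of $f_{3}$. Thus (i) reduces to showing $f_{3}>0$ on $(0,\infty)$, and (ii) to showing $f_{3}<0$ on $(0,\infty)$.

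For (i): if $(p,q)=(0,0)$ then $f_{3}\equiv 1>0$, while $\tfrac{5}{8}p-\tfrac{15}{8}q+1=1\geq 0$, so there is nothing to check; if $(p,q)\in\mathbb{I}_{1}$, Lemma~\ref{Lemma f3}(i) gives $f_{3}(x)>\tfrac{5}{8}p-\tfrac{15}{8}q+1$, and the standing hypothesis $\tfrac{5}{8}p-\tfrac{15}{8}q+1\geq 0$ forces $f_{3}>0$ on $(0,\infty)$. Hence $f_{1}^{\prime}>0$, $f_{1}$ is increasing, and $H_{p,q}$ is increasing by Lemma~\ref{Lemma monotonicity of ratio}. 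Case (ii) is the mirror image: for $(p,q)\in\mathbb{I}_{2}$, Lemma~\ref{Lemma f3}(ii) gives $f_{3}(x)<\tfrac{5}{8}p-\tfrac{15}{8}q+1\leq 0$, so $f_{1}^{\prime}<0$ and $H_{p,q}$ is decreasing.

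Since all the analytic labor is already contained in Lemmas~\ref{Lemma monotonicity of ratio}, \ref{Lemma monotocity of A/B} and especially \ref{Lemma f3}, I do not expect a genuine obstacle here; the only thing needing a little care is the bookkeeping that combines the one-sided bound on $f_{3}$ (whose boundary value $f_{3}(0^{+})=\tfrac{5}{8}p-\tfrac{15}{8}q+1$ is precisely the quantity appearing in the linear constraint) with that constraint to pin down a definite sign for $f_{3}$, together with the separate treatment of the degenerate point $(p,q)=(0,0)$, where $f_{3}$ is constant rather than strictly monotone.
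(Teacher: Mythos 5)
Your proposal is correct and follows essentially the same route as the paper: reduce the monotonicity of $H_{p,q}$ to the sign of $f_{2}=C\cdot f_{3}$ via the monotone form of L'H\^{o}pital's rule (Lemma \ref{Lemma monotonicity of ratio}) and the factorization (\ref{df1}), then combine the one-sided bound on $f_{3}$ from Lemma \ref{Lemma f3} with the linear constraint $\tfrac{5}{8}p-\tfrac{15}{8}q+1\geq 0$ (resp.\ $\leq 0$) to fix the sign. Your explicit handling of the degenerate point $(p,q)=(0,0)$, where $f_{3}\equiv 1$, is a small but welcome addition of care that the paper leaves implicit.
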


\begin{proof}
As mentioned previously, to prove the monotonicity of $H_{p,q}$, it suffices
to deal with the sings of $f_{2}\left( x\right) $ on $\left( 0,\infty
\right) $. It is clear that%
\begin{equation*}
\frac{f_{2}\left( x\right) }{C\left( x\right) }=f_{3}\left( x\right) ,
\end{equation*}%
where $f_{3}\left( x\right) $ is defined by (\ref{f3}). Then, $\func{sgn}%
f_{2}\left( x\right) =\func{sgn}f_{3}\left( x\right) $ due to $C\left(
x\right) >0$ for $x\in (0,\infty )$.

(i) When $\left( p,q\right) \in \mathbb{I}_{1}\cup \left\{ \left( 0,0\right)
\right\} $, it is obtained from Lemma \ref{Lemma f3} that $f_{2}\left(
x\right) >0$ for $x\in \left( 0,\infty \right) $ provided $%
\inf_{x>0}f_{3}\left( x\right) =5p/8-15q/8+1\geq 0$. Utilizing the relation (%
\ref{df1})\ and Lemma \ref{Lemma monotonicity of ratio} we get the
conclusion that $H_{p,q}$ is increasing on $(0,\infty )$ for $\left(
p,q\right) \in \left( \mathbb{I}_{1}\cup \left\{ \left( 0,0\right) \right\}
\right) \cap \left\{ 5p/8-15q/8+1\geq 0\right\} $.

(ii) When $\left( p,q\right) \in \mathbb{I}_{2}$, $f_{2}\left( x\right) <0$
for $x\in \left( 0,\infty \right) $ so long as $\sup_{x>0}f_{3}\left(
x\right) =5p/8-15q/8+1\leq 0$. Then, $H_{p,q}$ is decreasing on $(0,\infty )$
for $\left( p,q\right) \in \mathbb{I}_{2}\cap \left\{ 5p/8-15q/8+1\leq
0\right\} $.

Thus we complete the proof.
\end{proof}

It is easy to check that $\left( p,q\right) \in (\mathbb{I}_{1}\cup \left\{
\left( 0,0\right) \right\} \cap \left\{ \tfrac{5}{8}p-\tfrac{15}{8}q+1\geq
0\right\} $ is equivalent to%
\begin{equation*}
p\geq \left\{ 
\begin{array}{ll}
3q-\frac{8}{5}\bigskip & \text{if }q\in \lbrack \frac{34}{35},\infty ) \\ 
\tfrac{23}{17}q\bigskip & \text{if }q\in \lbrack 0,\frac{34}{35}), \\ 
q & \text{if }q\in \left( -\infty ,0\right) ,%
\end{array}%
\right.
\end{equation*}%
while $\left( p,q\right) \in \mathbb{I}_{2}\cap \left\{ \tfrac{5}{8}p-\tfrac{%
15}{8}q+1\leq 0\right\} $ is equivalent to%
\begin{equation*}
p\leq \left\{ 
\begin{array}{ll}
q\bigskip & \text{if }q\in \lbrack \frac{4}{5},\infty ), \\ 
3q-\frac{8}{5} & \text{if }q\in \left( -\infty ,\frac{4}{5}\right) .%
\end{array}%
\right.
\end{equation*}%
Then Proposition \ref{P main1} can be restated as follows.

\begin{proposition}
\label{P main2}Let $H_{p,q}$ be defined on $(0,\infty $ by (\ref{H_p,q}).
Then

(i) when $q\in \left[ 34/35,\infty \right) $, $H_{p,q}$ is increasing on $%
(0,\infty )$ for $p\geq 3q-8/5$ and decreasing for $p\leq q$;

(ii) when $q\in \left[ 4/5,34/35\right) $, $H_{p,q}$ is increasing on $%
(0,\infty )$ for $p\geq 23q/17$ and decreasing for $p\leq q$;

(iii) when $q\in (0,4/5$, $H_{p,q}$ is increasing on $(0,\infty )$ for $%
p\geq 23q/17$ and decreasing for $p\leq 3q-8/5$;

(iv) when $q\in (-\infty ,0]$, $H_{p,q}$ is increasing on $(0,\infty )$ for $%
p\geq q$ and decreasing for $p\leq 3q-8/5$.
\end{proposition}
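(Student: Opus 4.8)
The plan is to derive Proposition~\ref{P main2} purely as a bookkeeping reformulation of Proposition~\ref{P main1}, translating the region descriptions $\left(p,q\right)\in(\mathbb{I}_{1}\cup\{(0,0)\})\cap\{\tfrac58p-\tfrac{15}{8}q+1\ge0\}$ and $\left(p,q\right)\in\mathbb{I}_{2}\cap\{\tfrac58p-\tfrac{15}{8}q+1\le0\}$ into explicit bounds on $p$ as a function of $q$ over the four ranges of $q$. Since the excerpt already asserts the two set equivalences immediately before the statement, the task reduces to reading off those piecewise descriptions case by case; thus the proof will be short.

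First I would recall that by definition $\mathbb{I}_{1}=\{q=0,p>0\}\cup\{q>0,\tfrac pq\ge\tfrac{23}{17}\}\cup\{q<0,\tfrac pq\le1\}$ and $\mathbb{I}_{2}=\{q=0,p<0\}\cup\{q>0,\tfrac pq\le1\}\cup\{q<0,\tfrac pq\ge\tfrac{23}{17}\}$. For the increasing case I would split on the sign of $q$: for $q>0$ the condition $p/q\ge23/17$ reads $p\ge23q/17$, and the linear constraint $\tfrac58p-\tfrac{15}{8}q+1\ge0$ reads $p\ge3q-\tfrac85$; comparing the two lines $p=23q/17$ and $p=3q-8/5$ one finds they cross at $q=34/35$, with $3q-8/5$ dominating for $q\ge34/35$ and $23q/17$ dominating for $0<q<34/35$. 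This yields parts (i) and (ii) (increasing half) and the increasing half of (iii). For $q\le0$, in $\mathbb{I}_{1}$ the relevant condition is $p\le q$ when $q<0$, i.e.\ $p/q\ge1$; but one must also incorporate the case $q=0$ (handled by $\{(0,0)\}$ together with $p>0$) and check that the linear constraint $\tfrac58p-\tfrac{15}{8}q+1\ge0$ is automatically implied or combined correctly, giving the statement $p\ge q$ in part (iv); here the small subtlety is that the half-line $p\ge q$ for $q\le0$ must be shown to lie inside $\{\tfrac58p-\tfrac{15}{8}q+1\ge0\}$ after intersecting, or else the binding constraint changes — I would verify that at $q=0$ the two descriptions agree and that for $q<0$ the constraint $p\ge q$ is indeed the operative one.

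For the decreasing case the argument is symmetric. In $\mathbb{I}_{2}$, for $q>0$ the condition $p/q\le1$ gives $p\le q$, and combined with $\tfrac58p-\tfrac{15}{8}q+1\le0$, i.e.\ $p\le3q-\tfrac85$, one compares the lines $p=q$ and $p=3q-8/5$, which cross at $q=4/5$; for $q\ge4/5$ the binding bound is $p\le q$ and for $q<4/5$ it is $p\le3q-8/5$. For $q<0$ the condition $p/q\ge23/17$ becomes $p\le23q/17$ (the inequality flips since $q<0$), and after intersecting with $p\le3q-8/5$ one checks which is smaller; the crossing is again at $q=34/35$, but since $q<0<34/35$ the bound $p\le3q-8/5$ is the operative one throughout $q<0$, so this merges with the $0<q<4/5$ case into the single clause "$p\le3q-8/5$ if $q\in(-\infty,4/5)$" and "$p\le q$ if $q\in[4/5,\infty)$" of Proposition~\ref{P main1}; splitting $q<4/5$ into $(0,4/5)$ and $(-\infty,0]$ then gives the decreasing halves of parts (iii) and (iv). The case $q=0$ in $\mathbb{I}_{2}$ requires $p<0$, consistent with $p\le3q-8/5=-8/5$? — no: at $q=0$ the clause says $p\le-8/5$, but $\mathbb{I}_{2}$ only demands $p<0$; here I would note that the extra linear constraint $\tfrac58p+1\le0$ at $q=0$ forces $p\le-8/5$, so there is no inconsistency, and the piecewise formula is correct as stated.

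The only genuine obstacle is the accounting at the boundary values $q=0$, $q=4/5$, and $q=34/35$, and the direction of the inequality flips when dividing by negative $q$; everything else is a transcription. I would therefore organize the proof as: (1) restate the two intersected regions from Proposition~\ref{P main1}; (2) in each of the four $q$-intervals, identify which of the two candidate linear bounds on $p$ is binding (for the increasing case: $23q/17$ versus $3q-8/5$ versus $q$; for the decreasing case: $q$ versus $3q-8/5$ versus $23q/17$), paying attention to the sign of $q$ when rewriting $p/q\ge c$ or $p/q\le c$ as a bound on $p$; (3) conclude each clause (i)--(iv) by invoking the corresponding part of Proposition~\ref{P main1}. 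No new analytic input beyond Proposition~\ref{P main1} is needed.
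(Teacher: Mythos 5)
Your proposal is correct and follows essentially the same route as the paper, which likewise obtains Proposition \ref{P main2} purely by rewriting the regions $(\mathbb{I}_{1}\cup\{(0,0)\})\cap\{\tfrac{5}{8}p-\tfrac{15}{8}q+1\ge 0\}$ and $\mathbb{I}_{2}\cap\{\tfrac{5}{8}p-\tfrac{15}{8}q+1\le 0\}$ from Proposition \ref{P main1} as piecewise-linear bounds on $p$ in terms of $q$, with the binding constraint switching at the crossings $q=34/35$ (of $p=23q/17$ with $p=3q-8/5$) and $q=4/5$ (of $p=q$ with $p=3q-8/5$). One minor slip in your intermediate bookkeeping: for $q<0$ the condition from $\mathbb{I}_{1}$ is $p/q\le 1$, which upon multiplying by the negative number $q$ becomes $p\ge q$ (you wrote ``$p\le q$, i.e.\ $p/q\ge 1$''), but your final conclusion $p\ge q$ in part (iv) is the correct one, so the argument stands.
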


On the other hand, $\left( p,q\right) \in (\mathbb{I}_{1}\cup \left\{ \left(
0,0\right) \right\} \cap \left\{ \tfrac{5}{8}p-\tfrac{15}{8}q+1\geq
0\right\} $ is equivalent to%
\begin{equation*}
q\leq \left\{ 
\begin{array}{ll}
\tfrac{p}{3}+\tfrac{8}{15}\bigskip & \text{if }p\in \lbrack \frac{46}{35}%
,\infty ) \\ 
\tfrac{17}{23}p\bigskip & \text{if }q\in \lbrack 0,\frac{46}{35}), \\ 
p & \text{if }q\in \left( -\infty ,0\right) ,%
\end{array}%
\right.
\end{equation*}%
while $\left( p,q\right) \in \mathbb{I}_{2}\cap \left\{ \tfrac{5}{8}p-\tfrac{%
15}{8}q+1\leq 0\right\} $ is equivalent to%
\begin{equation*}
q\geq \left\{ 
\begin{array}{ll}
p\bigskip & \text{if }p\in \lbrack \frac{4}{5},\infty ), \\ 
\tfrac{p}{3}+\tfrac{8}{15} & \text{if }q\in (-\infty ,\frac{4}{5}).%
\end{array}%
\right.
\end{equation*}%
Then Proposition \ref{P main1} also can be restated in another equivalent
assertion.

\begin{proposition}
\label{P main3}Let $H_{p,q}$ be defined on $(0,\infty $ by (\ref{H_p,q}).
Then

(i) when $p\in \left[ 46/35,\infty \right) $, $H_{p,q}$ is increasing on $%
(0,\infty )$ for $q\leq p/3+8/15$ and decreasing for $q\geq p$;

(ii) when $p\in \left[ 4/5,46/35\right) $, $H_{p,q}$ is increasing on $%
(0,\infty )$ for $q\leq 17p/23$ and decreasing for $q\geq p$;

(iii) when $p\in (0,4/5)$, $H_{p,q}$ is increasing on $(0,\infty )$ for $%
q\leq 17p/23$ and decreasing for $q\geq p/3+8/15$;

(iv) when $p\in (-\infty ,0]$, $H_{p,q}$ is increasing on $(0,\infty )$ for $%
q\leq p$ and decreasing for $q\geq p/3+8/15$.
\end{proposition}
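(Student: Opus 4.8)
The plan is to recognise that Proposition~\ref{P main3} is just Proposition~\ref{P main1} (equivalently Proposition~\ref{P main2}) with the admissible region for $(p,q)$ re-sliced: instead of fixing $q$ and reading off the range of $p$, we fix $p$ and read off the range of $q$. Thus the whole content reduces to the two elementary set identities asserted immediately before the statement. First, that
\[
(\mathbb{I}_{1}\cup\{(0,0)\})\cap\{\tfrac{5}{8}p-\tfrac{15}{8}q+1\ge 0\}
\]
is precisely the set of $(p,q)$ with $q\le p/3+8/15$ when $p\ge 46/35$, with $q\le 17p/23$ when $0\le p<46/35$, and with $q\le p$ when $p<0$; and second, that
\[
\mathbb{I}_{2}\cap\{\tfrac{5}{8}p-\tfrac{15}{8}q+1\le 0\}
\]
is precisely the set with $q\ge p$ when $p\ge 4/5$ and with $q\ge p/3+8/15$ when $p<4/5$. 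Granting these, clauses (i)--(iv) follow at once from Proposition~\ref{P main1}.

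To establish the first identity I would start by unwinding the definition (\ref{I1}) of $\mathbb{I}_{1}$ into genuine linear inequalities: the piece $q=0,\ p>0$ together with the point $(0,0)$ is $\{q=0,\ p\ge 0\}$; the piece $q>0,\ p/q\ge 23/17$ is $\{q>0,\ q\le 17p/23\}$; and the piece $q<0,\ p/q\le 1$, on multiplying through by the \emph{negative} number $q$, is $\{q<0,\ q\le p\}$. After intersecting with $\{q\le p/3+8/15\}$ only four lines remain in play, namely $q=0$, $q=p$, $q=17p/23$ and $q=p/3+8/15$, whose pairwise crossings occur at the origin, at $p=4/5$ (where $q=p$ meets $q=p/3+8/15$) and at $p=46/35$ (where $q=17p/23$ meets $q=p/3+8/15$, both lines passing through $q=34/35$). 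Splitting the $p$-axis at these abscissae, in each subinterval I would identify which of the competing upper bounds on $q$ is the smallest, i.e.\ which line is the lower envelope, obtaining $q\le p/3+8/15$ for $p\ge 46/35$, $q\le 17p/23$ for $0\le p<46/35$, and $q\le p$ for $p<0$ (in the last range one also notes $p\le p/3+8/15$, so the half-plane constraint is inactive there). The second identity is treated the same way from (\ref{I2}): the lower envelope of $q=p$ and $q=p/3+8/15$ switches from $q=p/3+8/15$ to $q=p$ at $p=4/5$, and the branch $q<0,\ p/q\ge 23/17$ of $\mathbb{I}_{2}$, which reads $q\ge 17p/23$, is automatically subsumed by $q\ge p/3+8/15$ whenever $p<46/35$, so it contributes nothing new.

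I do not anticipate any real obstacle; this is a bookkeeping argument. The two points that require care are: (a) dividing the hypotheses $p/q\ge 23/17$ and $p/q\le 1$ by $q$ reverses the inequality exactly when $q<0$, so the sub-cases $q>0$ and $q<0$ must be kept strictly apart throughout; and (b) in each $p$-interval one must actually verify that the bound asserted in the proposition is the tightest among the two or three active linear constraints, rather than merely one of them --- a short envelope-of-lines check. As a sanity check, the region so obtained should coincide, piece by piece, with the one in Proposition~\ref{P main2} (the same planar set cut by horizontal versus vertical lines), and the degenerate cases $p=0$ and $q=0$ should match the memberships in $\mathbb{I}_{1}\cup\{(0,0)\}$ and $\mathbb{I}_{2}$ exactly.
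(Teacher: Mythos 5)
Your proposal is correct and follows essentially the same route as the paper: the paper likewise derives Proposition~\ref{P main3} by restating the admissible region of Proposition~\ref{P main1} with $p$ fixed, asserting exactly the two piecewise-linear descriptions of $(\mathbb{I}_{1}\cup\{(0,0)\})\cap\{\tfrac{5}{8}p-\tfrac{15}{8}q+1\ge 0\}$ and $\mathbb{I}_{2}\cap\{\tfrac{5}{8}p-\tfrac{15}{8}q+1\le 0\}$ that you verify. Your envelope-of-lines bookkeeping (with the breakpoints at $p=4/5$ and $p=46/35$, and the sign care when clearing $q$ from $p/q$) simply supplies the elementary check the paper leaves implicit.
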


Put $p=kq$, then by Proposition \ref{P main1} in combination with its proof,
we have

\begin{corollary}
\label{Corollary p=kq}Let $H_{p,q}$ be defined on $(0,\infty )$ by (\ref%
{H_p,q}). Then

(i) when $k\in \left( 3,\infty \right) $, $H_{kq,q}$ is increasing for $%
q\geq 0$ and decreasing for $q\leq 8/\left( 5\left( 3-k\right) \right) $;

(ii) when $k=3$, $H_{kq,q}$ is increasing for $q\in \mathbb{R}$;

(iii) when $k\in \left[ 23/17,3\right) $, $H_{kq,q}$ is increasing for $%
0\leq q\leq 8/\left( 5\left( 3-k\right) \right) $;

(iv) when $k\in (1,23/17)$, $H_{kq,q}$ is increasing for $q=0$;

(v) when $k\in (-\infty ,1]$, $H_{kq,q}$ is increasing for $q\leq 0$ and
decreasing for $q\geq 8/\left( 5\left( 3-k\right) \right) $.
\end{corollary}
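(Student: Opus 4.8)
The plan is to deduce Corollary \ref{Corollary p=kq} directly from Proposition \ref{P main1} by specializing $p=kq$ and re-expressing all the region conditions in terms of $k$ and $q$. The essential input is that $H_{p,q}$ is increasing when $(p,q)\in(\mathbb{I}_1\cup\{(0,0)\})\cap\{\tfrac58 p-\tfrac{15}8 q+1\geq 0\}$ and decreasing when $(p,q)\in\mathbb{I}_2\cap\{\tfrac58 p-\tfrac{15}8 q+1\leq 0\}$, with $\mathbb{I}_1,\mathbb{I}_2$ given by \eqref{I1} and \eqref{I2}. So first I would substitute $p=kq$ into each of the defining inequalities and carefully track how the direction of each inequality flips according to the sign of $q$.

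Concretely, for the increasing case: the condition $p/q\geq 23/17$ becomes (for $q>0$) simply $k\geq 23/17$, while for $q<0$ the condition $p/q\leq 1$ becomes $k\geq 1$ (note $p/q = k$ when $q\neq 0$, so no flip is needed in the \emph{ratio} itself — the flip only matters when one clears denominators, which we avoid here); the case $q=0,p>0$ forces $k>0$ with $q=0$. The linear constraint $\tfrac58 kq-\tfrac{15}8 q+1\geq 0$, i.e. $\tfrac58 q(k-3)\geq -1$, splits on the sign of $k-3$: if $k>3$ it says $q\geq \tfrac{-8}{5(k-3)} = \tfrac{8}{5(3-k)}$ which is automatic for $q\geq 0$ since the right side is negative, whereas if $k<3$ it says $q\leq \tfrac{8}{5(3-k)}$. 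Intersecting these with the $\mathbb{I}_1$-conditions and doing the same bookkeeping for the decreasing case using $\mathbb{I}_2$ and $\tfrac58 q(k-3)\leq -1$, one reads off exactly the five alternatives (i)--(v). The borderline values $k=23/17$, $k=1$, $k=3$ are handled by noting which inclusions degenerate (e.g. at $k=3$ the linear constraint becomes $1\geq 0$, always true, and $\{p/q\geq 23/17\}$ holds since $3\geq 23/17$, so $H_{3q,q}$ is increasing for all $q$; at $k\in(1,23/17)$ neither the $q>0$ nor the $q<0$ branch of $\mathbb{I}_1$ applies, leaving only $q=0$).

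I would organize the write-up as a short case analysis on the value of $k$, in each case citing Proposition \ref{P main1} (both parts) with $p$ replaced by $kq$ and simplifying. For the increasing conclusions I would also invoke, as the proposition's proof does, that $\inf_{x>0}f_3(x)=\tfrac58 p-\tfrac{15}8 q+1$ so that $f_2(x)>0$ on $(0,\infty)$ exactly under the stated linear constraint, and symmetrically $\sup_{x>0}f_3=\tfrac58 p-\tfrac{15}8 q+1$ for the decreasing conclusions; this is ``Proposition \ref{P main1} in combination with its proof'' as the statement announces. I do not expect any genuine obstacle here — it is pure region-chasing — but the one place to be careful is the sign flips when dividing the linear inequality $\tfrac58 q(k-3)\geq -1$ by $k-3$, since that is where $q\geq \tfrac{8}{5(3-k)}$ versus $q\leq \tfrac{8}{5(3-k)}$ gets decided, and mixing it up would swap the $q$-ranges in (i), (iii), (v). A secondary point of care is making sure the trivial/degenerate branches ($q=0$, and the boundary $k$-values) are not silently dropped, so that the five cases together exhaust $k\in\mathbb{R}$ and match the claimed ranges verbatim.
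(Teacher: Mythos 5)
Your overall route --- substitute $p=kq$ into Proposition \ref{P main1} and chase the regions $\mathbb{I}_1,\mathbb{I}_2$ together with the linear constraint $\tfrac58p-\tfrac{15}{8}q+1\gtrless 0$ --- is exactly what the paper does (its ``proof'' is the single phrase ``by Proposition \ref{P main1} in combination with its proof''), and your handling of cases (i), (iii), (iv), (v) and of the linear constraint $q(k-3)\geq -8/5$ is correct. However, there is a genuine error in your translation of the $q<0$ branch of $\mathbb{I}_1$: since $p/q=k$ for $q\neq 0$, the condition $p/q\leq 1$ reads $k\leq 1$, not ``$k\geq 1$'' as you wrote. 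This is not a harmless typo, because it is precisely what you use to justify case (ii) for $q<0$: you argue that for $k=3$ the inclusion in $\mathbb{I}_1$ ``holds since $3\geq 23/17$,'' but the condition $p/q\geq 23/17$ is the $q>0$ branch of $\mathbb{I}_1$; for $q<0$ the pair $(3q,q)$ fails the correct condition $k\leq 1$ and hence does \emph{not} lie in $\mathbb{I}_1$. It lies instead in $\mathbb{I}_2$, where $f_3$ is decreasing from $\tfrac58(3q)-\tfrac{15}{8}q+1=1>0$ down to $-\infty$, so $f_2$ changes sign and neither part of Proposition \ref{P main1} applies.

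This gap cannot be patched within the proposed framework: for $q<0$ one has $(\sinh x/x)^{3q}\to 0$ and $\cosh^q x\to 0$ as $x\to\infty$, so $H_{3q,q}(+\infty)=\tfrac13=H_{3q,q}(0^+)$, while $H_{3q,q}$ is not constant (e.g. $H_{-3,-1}(1)\approx 0.364$); a function with equal limits at both endpoints cannot be strictly increasing on $(0,\infty)$. So case (ii) of the corollary is obtainable from Proposition \ref{P main1} only for $q\geq 0$ (where $k=3\geq 23/17$ and the constraint $1\geq 0$ do apply), and the claim for $q<0$ is an overstatement that your argument inherits rather than detects. You should restrict (ii) to $q\geq 0$ (or flag the discrepancy), and correct the direction of the $q<0$ branch of $\mathbb{I}_1$ throughout, since the same slip would otherwise also contaminate the boundary analysis between cases (i) and (v).
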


If $5p/8-15q/8+1=0$, that is, $p=3q-8/5$ or $q=p/3+8/15$, then we easily
check that 
\begin{eqnarray*}
&&(\mathbb{I}_{1}\cup \left\{ \left( 0,0\right) \right\} \cap \left\{ \tfrac{%
5}{8}p-\tfrac{15}{8}q+1=0\right\} \\
&=&\left\{ q\geq \tfrac{34}{35},p=3q-\tfrac{8}{5}\right\} =\left\{ p\geq 
\tfrac{46}{35},q=\tfrac{p}{3}+\tfrac{8}{15}\right\} ,
\end{eqnarray*}%
\begin{eqnarray*}
&&\mathbb{I}_{2}\cap \left\{ \tfrac{5}{8}p-\tfrac{15}{8}q+1=0\right\} \\
&=&\left\{ q\leq \tfrac{4}{5},p=3q-\tfrac{8}{5}\right\} =\left\{ p\leq 
\tfrac{4}{5},q=\tfrac{p}{3}+\tfrac{8}{15}\right\} \text{,}
\end{eqnarray*}%
and then by Proposition \ref{P main1} we get

\begin{corollary}
\label{Corollary p=3q-8/5}Let $H_{p,q}$ be defined on $(0,\infty $ by (\ref%
{H_p,q}). Then $H_{3q-8/5,q}$ is increasing if $q\geq 34/35$ and decreasing
if $q\leq 4/5$. In other words, $H_{p,p/3+8/15}$ is increasing if $p\geq
46/35$ and decreasing if $p\leq 4/5$.
\end{corollary}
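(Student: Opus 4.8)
The plan is to specialize Proposition~\ref{P main1} to the line $p=3q-\tfrac{8}{5}$ (equivalently $q=\tfrac{p}{3}+\tfrac{8}{15}$), along which the linear quantity $\tfrac{5}{8}p-\tfrac{15}{8}q+1$ vanishes identically: substituting $p=3q-\tfrac{8}{5}$ gives
\begin{equation*}
\tfrac{5}{8}\left(3q-\tfrac{8}{5}\right)-\tfrac{15}{8}q+1=\tfrac{15}{8}q-1-\tfrac{15}{8}q+1=0 .
\end{equation*}
Hence the constraint $\tfrac{5}{8}p-\tfrac{15}{8}q+1\ge 0$ appearing in Proposition~\ref{P main1}(i) and the constraint $\tfrac{5}{8}p-\tfrac{15}{8}q+1\le 0$ appearing in Proposition~\ref{P main1}(ii) are both automatically met. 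So the whole matter reduces to deciding for which $q$ the point $\bigl(3q-\tfrac{8}{5},\,q\bigr)$ lies in $\mathbb{I}_1\cup\{(0,0)\}$ and for which $q$ it lies in $\mathbb{I}_2$, with $\mathbb{I}_1,\mathbb{I}_2$ as defined in (\ref{I1}) and (\ref{I2}); these are exactly the two displayed set identities preceding the statement, which I would verify by the short case analysis below (noting first that $(0,0)$ is not on the line, since $3\cdot0-\tfrac85\neq 0$, so that point contributes nothing).

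For the increasing conclusion: since $\tfrac{34}{35}>0$, every $q\ge\tfrac{34}{35}$ lies in the branch $\{q>0\}$ of $\mathbb{I}_1$, where the requirement $\tfrac{p}{q}\ge\tfrac{23}{17}$ becomes $3-\tfrac{8}{5q}\ge\tfrac{23}{17}$, i.e. $\tfrac{8}{5q}\le\tfrac{28}{17}$, i.e. $q\ge\tfrac{34}{35}$ --- precisely the asserted range; the other two branches of $\mathbb{I}_1$ give nothing on the line. Proposition~\ref{P main1}(i) then yields that $H_{3q-8/5,\,q}$ is increasing on $(0,\infty)$. For the decreasing conclusion: when $0<q\le\tfrac{4}{5}$ we are in the branch $\{q>0\}$ of $\mathbb{I}_2$ and $\tfrac{p}{q}\le 1$ becomes $3-\tfrac{8}{5q}\le 1$, i.e. $q\le\tfrac{4}{5}$; when $q<0$ we are in the branch $\{q<0\}$ of $\mathbb{I}_2$ and $\tfrac{p}{q}\ge\tfrac{23}{17}$ holds automatically because $3-\tfrac{8}{5q}>3$ there; and when $q=0$ we have $p=-\tfrac{8}{5}<0$, so $(-\tfrac{8}{5},0)\in\mathbb{I}_2$. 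Thus $\bigl(3q-\tfrac{8}{5},\,q\bigr)\in\mathbb{I}_2$ for every $q\le\tfrac{4}{5}$, and Proposition~\ref{P main1}(ii) gives that $H_{3q-8/5,\,q}$ is decreasing on $(0,\infty)$. Finally, substituting $q=\tfrac{p}{3}+\tfrac{8}{15}$ sends the thresholds $q=\tfrac{34}{35}$ and $q=\tfrac{4}{5}$ to $p=\tfrac{46}{35}$ and $p=\tfrac{4}{5}$, which is the restated form.

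There is essentially no obstacle here. The only points demanding a touch of care are the reversal of the inequality direction when clearing the (positive or negative) denominator $q$ in the $\tfrac{p}{q}$ conditions, in particular in the $q<0$ subcase, and the separate treatment of the degenerate point $q=0$; all remaining bookkeeping is already encoded in the set identities displayed just above the corollary.
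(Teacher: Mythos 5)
Your proposal is correct and follows essentially the same route as the paper: both reduce the corollary to Proposition~\ref{P main1} by observing that $\tfrac{5}{8}p-\tfrac{15}{8}q+1$ vanishes identically on the line $p=3q-\tfrac{8}{5}$, and then determine membership of $(3q-\tfrac{8}{5},q)$ in $\mathbb{I}_1$ resp.\ $\mathbb{I}_2$. The only difference is that the paper simply asserts the resulting set identities as ``easily checked,'' while you carry out the branch-by-branch verification (including the $q<0$ sign reversal and the $q=0$ point) explicitly, which is accurate throughout.
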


\section{Results}

In this section, we will give some new inequalities involving hyperbolic
functions by using monotonicity theorems given in previous section.

Note that%
\begin{eqnarray*}
&&(\mathbb{I}_{1}\cup \left\{ \left( 0,0\right) \right\} \cap \left\{ \tfrac{%
5}{8}p-\tfrac{15}{8}q+1\geq 0\right\} \\
&=&\left\{ 0\leq q\leq \min (17p/23,p/3+8/15)\right\} \cup \left\{ q\leq
\min (0,p,p/3+8/15\right\} ),
\end{eqnarray*}%
\begin{eqnarray*}
&&\mathbb{I}_{2}\cap \left\{ \tfrac{5}{8}p-\tfrac{15}{8}q+1\leq 0\right\} \\
&=&\left\{ \max (17p/23,p/3+8/15)\leq q<0\right\} \cup \left\{ q\geq \max
(0,p,p/3+8/15)\right\}
\end{eqnarray*}%
and that $H_{p,q}\left( 0^{+}\right) <\left( >\right) H_{p,q}\left(
0^{+}\right) =1/3$ is equivalent to $Sh_{p}\left( x\right) <\left( >\right)
\left( 1/3\right) Ch_{q}\left( x\right) $ for $x\in (0,\infty )$. By
Proposition \ref{P main1}, we obtain the following theorem immediately.

\begin{theorem}
\label{MT1}(i) If $0\leq q\leq \min (17p/23,p/3+8/15)$ or $q\leq \min
(0,p,p/3+8/15)$, then the inequalities%
\begin{eqnarray}
\frac{\left( \tfrac{\sinh x}{x}\right) ^{p}-1}{p} &>&\frac{1}{3}\frac{\cosh
^{q}x-1}{q}\text{ if }pq\neq 0,  \label{MI1} \\
\ln \frac{\sinh x}{x} &>&\frac{1}{3}\frac{\cosh ^{q}x-1}{q}\text{ if }%
p=0,q\neq 0,  \label{MI1p=0} \\
\frac{\left( \tfrac{\sinh x}{x}\right) ^{p}-1}{p} &>&\frac{1}{3}\ln \cosh x%
\text{ \ \ \ if }p\neq 0,q=0,  \label{Mi1q=0} \\
\ln \frac{\sinh x}{x} &>&\frac{1}{3}\ln \cosh x\text{ \ \ \ if }p=q=0,
\label{MI1p=q=0}
\end{eqnarray}%
hold for $x\in (0,\infty )$, where $1/3$ is the best constant.

(ii) If $\max (17p/23,p/3+8/15)\leq q<0$ or $q\geq \max (0,p,p/3+8/15)$,
then (\ref{MI1}), (\ref{MI1p=0}) and (\ref{Mi1q=0}) are reversed.
\end{theorem}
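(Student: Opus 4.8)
The plan is to obtain the four inequalities directly from the monotonicity of $H_{p,q}$ furnished by Proposition \ref{P main1}, using only the boundary value $H_{p,q}(0^+)=1/3$ recorded in the Introduction. First I would invoke the set identities displayed immediately before the statement: the hypothesis of part (i), namely $0\le q\le\min(17p/23,p/3+8/15)$ or $q\le\min(0,p,p/3+8/15)$, describes exactly $(\mathbb{I}_1\cup\{(0,0)\})\cap\{\tfrac58 p-\tfrac{15}{8}q+1\ge 0\}$, whereas the hypothesis of part (ii) describes exactly $\mathbb{I}_2\cap\{\tfrac58 p-\tfrac{15}{8}q+1\le 0\}$. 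Hence, under the hypothesis of (i), Proposition \ref{P main1}(i) gives that $H_{p,q}$ is strictly increasing on $(0,\infty)$, and under the hypothesis of (ii), Proposition \ref{P main1}(ii) gives that $H_{p,q}$ is strictly decreasing on $(0,\infty)$.

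Next I would pin down $H_{p,q}(0^+)$. Using $\tfrac{\sinh x}{x}=1+\tfrac{x^2}{6}+o(x^2)$ and $\cosh x=1+\tfrac{x^2}{2}+o(x^2)$ one gets, for every value of $p$ (including $p=0$) and of $q$ (including $q=0$), that $Sh_p(x)=\tfrac{x^2}{6}+o(x^2)$ and $Ch_q(x)=\tfrac{x^2}{2}+o(x^2)$, so that $H_{p,q}(x)=Sh_p(x)/Ch_q(x)\to 1/3$ as $x\to 0^+$. Consequently, in case (i), monotonicity yields $H_{p,q}(x)>H_{p,q}(0^+)=1/3$ for all $x>0$, i.e. $Sh_p(x)>\tfrac13 Ch_q(x)$. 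Because $\tfrac{\sinh x}{x},\cosh x\in(1,\infty)$, Lemma \ref{Lemma u_p} guarantees $Sh_p(x)=U_p(\tfrac{\sinh x}{x})>0$ and $Ch_q(x)=U_q(\cosh x)>0$ for every choice of the parameters, so multiplying out the definitions of $Sh_p$ and $Ch_q$ in the four cases $pq\ne 0$, $p=0\ne q$, $p\ne 0=q$, $p=q=0$ produces precisely \eqref{MI1}, \eqref{MI1p=0}, \eqref{Mi1q=0}, \eqref{MI1p=q=0}, with no sign reversal. In case (ii) the identical argument with "increasing" replaced by "decreasing" gives $H_{p,q}(x)<1/3$, i.e. the reversed forms of \eqref{MI1}, \eqref{MI1p=0}, \eqref{Mi1q=0}.

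For the sharpness assertion in (i): if $Sh_p(x)\ge c\,Ch_q(x)$ held for all $x>0$ with some $c>1/3$, then dividing by $Ch_q(x)>0$ and letting $x\to 0^+$ would force $1/3=H_{p,q}(0^+)\ge c$, a contradiction; thus $1/3$ cannot be enlarged, i.e. it is the best constant. The same limiting argument shows that $1/3$ is likewise the best (now smallest) constant in the reversed inequalities of (ii).

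There is no genuine obstacle: once Proposition \ref{P main1} and the limit $H_{p,q}(0^+)=1/3$ are in hand, the theorem is essentially a restatement. The only point requiring a little care is the bookkeeping of the degenerate parameter values $p=0$, $q=0$, $p=q=0$, which appear in \eqref{MI1p=0}, \eqref{Mi1q=0}, \eqref{MI1p=q=0}; this is handled uniformly by observing, via Lemma \ref{Lemma u_p}, that $Sh_p$ and $Ch_q$ are positive on $(0,\infty)$ for all parameter values, so that passing between the ratio form $H_{p,q}(x)\gtrless 1/3$ and the product form $Sh_p(x)\gtrless\tfrac13 Ch_q(x)$ never changes the direction of the inequality.
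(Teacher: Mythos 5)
Your proposal is correct and follows essentially the same route as the paper: the paper likewise derives Theorem \ref{MT1} "immediately" from Proposition \ref{P main1} after identifying the parameter regions with $(\mathbb{I}_1\cup\{(0,0)\})\cap\{\tfrac58p-\tfrac{15}{8}q+1\ge 0\}$ and $\mathbb{I}_2\cap\{\tfrac58p-\tfrac{15}{8}q+1\le 0\}$ and using $H_{p,q}(0^+)=1/3$. Your added details on the degenerate cases $p=0$ or $q=0$ and on the sharpness of the constant $1/3$ only make explicit what the paper leaves implicit.
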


For clarity of expressions, in what follows we will directly write $%
Sh_{p}\left( x\right) ,Ch_{q}\left( x\right) ,H_{p,q}\left( x\right) $ etc.
by their general formulas, and if $pq=0$, then we regard them as limits at $%
p=0$ or $q=0$, unless otherwise specified. Now we are ready to establish
sharp inequalities for hyperbolic by Propositions \ref{P main2} and \ref{P
main3}, Corollaries \ref{Corollary p=kq} and \ref{Corollary p=3q-8/5}. To
this end, we need a lemma.

\begin{lemma}
\label{Lemma D_p.q}Let $D_{p,q}$ be defined on $(0,\infty )$ by 
\begin{equation}
D_{p,q}\left( x\right) =Sh_{p}\left( x\right) -\frac{1}{3}Ch_{q}\left(
x\right) =\frac{\left( \frac{\sinh x}{x}\right) ^{p}-1}{p}-\frac{\left(
\cosh x\right) ^{q}-1}{3q}.  \label{D_p,q}
\end{equation}%
(i) We have%
\begin{eqnarray}
\lim_{x\rightarrow 0^{+}}\frac{D_{p,q}\left( x\right) }{x^{4}} &=&\frac{1}{72%
}\left( p-3q+\frac{8}{5}\right) ,  \label{Limit1} \\
\lim_{x\rightarrow 0^{+}}\frac{D_{3q-8/5,q}\left( x\right) }{x^{6}} &=&\frac{%
1}{270}\left( q-\frac{34}{35}\right) .  \label{Limit1 p=3q-8/5}
\end{eqnarray}%
(ii) For $p,q\geq 0$, we have%
\begin{equation}
\lim_{x\rightarrow \infty }e^{-qx}D_{p,q}\left( x\right) =\left\{ 
\begin{array}{ll}
\infty \medskip & \text{if }p>q\geq 0, \\ 
\infty \medskip & \text{if }p\geq q=0, \\ 
-\frac{2^{-q}}{3q}\medskip & \text{if }q\geq p>0, \\ 
-\frac{2^{-q}}{3q} & \text{if }q>p=0;%
\end{array}%
\right.  \label{Limit2p,q>=0}
\end{equation}%
for other cases, we have%
\begin{equation}
\lim_{x\rightarrow \infty }D_{p,q}\left( x\right) =\left\{ 
\begin{array}{ll}
\infty \medskip & \text{if }p\geq 0,q<0, \\ 
-\infty \medskip & \text{if }p<0,q\geq 0, \\ 
\frac{1}{3q}-\frac{1}{p} & \text{if }p<0,q<0.%
\end{array}%
\right.  \label{Limit2others}
\end{equation}
\end{lemma}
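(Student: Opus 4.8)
The statement is essentially a collection of limit computations for the function
$D_{p,q}$ at the two endpoints $x\to 0^+$ and $x\to\infty$, so the plan is
to treat the small-$x$ and large-$x$ regimes separately and, within each,
to handle the generic case and then read off the degenerate cases
($p=0$, $q=0$, or $p=3q-8/5$) as limits of the generic formula (consistent
with the convention announced just before the lemma).

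For part (i), the natural approach is Taylor expansion at the origin. I would
write $\sinh x/x = 1 + x^2/6 + x^4/120 + O(x^6)$ and
$\cosh x = 1 + x^2/2 + x^4/24 + O(x^6)$, then expand
$(1+u)^p = 1 + pu + \tfrac{p(p-1)}{2}u^2 + \cdots$ with $u$ equal to the
respective tail. Dividing by $p$ (resp. $3q$) gives
$Sh_p(x) = \tfrac{x^2}{6} + \bigl(\tfrac{1}{120}+\tfrac{p-1}{72}\bigr)x^4 + O(x^6)$
and similarly for $\tfrac13 Ch_q(x)$; subtracting, the $x^2$ terms cancel
(both equal $x^2/6$ after the $1/3$ factor — this is exactly why
$H_{p,q}(0^+)=1/3$), and collecting the $x^4$ coefficient yields
$\tfrac{1}{72}\bigl(p-3q+\tfrac85\bigr)$, which is $(\ref{Limit1})$. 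For
$(\ref{Limit1 p=3q-8/5})$ one substitutes $p=3q-8/5$ so the $x^4$ term
vanishes identically, then carries the expansions one order further (to
$x^6$, so one needs the $x^6$ coefficients of $\sinh x/x$ and $\cosh x$ and
the cubic term of the binomial series) and simplifies the resulting
polynomial in $q$; it should factor as $\tfrac{1}{270}\bigl(q-\tfrac{34}{35}\bigr)$.
The limits at $p=0$ or $q=0$ are obtained by letting $p\to0$ (using
$U_0(t)=\ln t$, whose expansion is the $p\to0$ limit of $U_p$'s) and agree
with the generic formula by continuity of the coefficients in $p,q$.

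For part (ii), the large-$x$ asymptotics are driven by $\cosh x \sim e^x/2$
and $\sinh x/x \sim e^x/(2x)$. When $q>0$, $Ch_q(x)=\tfrac{\cosh^q x-1}{q}
\sim \tfrac{(e^x/2)^q}{q} = \tfrac{2^{-q}}{q}e^{qx}$, and $Sh_p(x)$ for
$p>0$ grows like $\tfrac{(e^x/(2x))^p}{p} = \tfrac{2^{-p}}{p}x^{-p}e^{px}$,
which is $e^{qx}$ times something tending to $0$ if $p<q$, to $\infty$ if
$p>q$, so multiplying $D_{p,q}$ by $e^{-qx}$ isolates the dominant term and
gives the four subcases of $(\ref{Limit2p,q>=0})$; when $q\ge p=0$ one uses
$Sh_0(x)=\ln(\sinh x/x)\sim x$, negligible against $e^{qx}$, leaving
$-\tfrac{2^{-q}}{3q}$. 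For $(\ref{Limit2others})$: if $q<0$ then
$Ch_q(x)=\tfrac{\cosh^q x-1}{q}\to -\tfrac1q>0$ a finite limit, while
$Sh_p(x)\to+\infty$ if $p\ge0$ (giving $+\infty$) and
$Sh_p(x)\to-\tfrac1p$ if $p<0$ (giving the finite value $\tfrac{1}{3q}-\tfrac1p$);
if $q\ge0$ but $p<0$ then $Sh_p(x)\to-\tfrac1p$ finite while
$\tfrac13 Ch_q(x)\to+\infty$, giving $-\infty$. Each case is a one-line
limit once the right leading behavior is named.

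The computations are all routine; the only genuinely delicate point is
the second-order expansion in $(\ref{Limit1 p=3q-8/5})$, where the $x^4$
coefficient has been arranged to vanish and one must be careful to keep
\emph{all} contributions to the $x^6$ coefficient — in particular the
cross term from the quadratic part of the binomial series acting on the
$x^2$ and $x^4$ pieces of $\sinh x/x$ and $\cosh x$, and the cubic term of
the binomial series — before the algebra collapses to the clean factor
$\tfrac{1}{270}(q-\tfrac{34}{35})$. I would double-check that coefficient
against $(\ref{Limit1})$ by noting that $p-3q+8/5=0$ is precisely the
locus where the $x^4$ term disappears, so the two limit formulas are
consistent on their overlap.
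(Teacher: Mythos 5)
Your proposal is correct and follows essentially the same route as the paper: Taylor/binomial expansion at the origin for part (i) (the paper records the expansion $D_{p,q}(x)=\tfrac{5p-15q+8}{360}x^{4}+\cdots$, whose leading coefficient is exactly your $\tfrac{1}{72}(p-3q+\tfrac{8}{5})$, and the $x^{6}$ coefficient after substituting $p=3q-8/5$), and the elementary exponential asymptotics $\cosh x\sim e^{x}/2$, $\sinh x/x\sim e^{x}/(2x)$ for part (ii), with the finite limits of $U_{p}$ for $p<0$ giving the remaining cases. The only cosmetic difference is that the paper handles $p>0,\ q=0$ by invoking the monotonicity of $p\mapsto Sh_{p}(x)$ to reduce to $D_{0,0}$, whereas your direct asymptotic $Sh_{p}(x)\sim\tfrac{2^{-p}}{p}x^{-p}e^{px}$ (which also settles the boundary case $p=q>0$ via the $x^{-p}$ factor) does the same job.
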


\begin{proof}
(i) For $pq\neq 0$, expanding in power series yields%
\begin{eqnarray}
D_{p,q}\left( x\right) &=&\tfrac{\left( \frac{\sinh x}{x}\right) ^{p}-1}{p}-%
\tfrac{\left( \cosh x\right) ^{q}-1}{3q}  \notag \\
&=&\tfrac{5p-15q+8}{360}x^{4}+\tfrac{35p^{2}-42p-315q^{2}+630q-320}{45360}%
x^{6}+o\left( x^{8}\right) ,  \label{D e}
\end{eqnarray}%
which leads to (\ref{Limit1}). It is easy to check that it holds for $p=0$
or $q=0$.

If $p=3q-8/5$, then we have%
\begin{equation*}
D_{p,q}\left( x\right) =\frac{35q-34}{9450}x^{6}+o\left( x^{8}\right) ,
\end{equation*}%
which implies (\ref{Limit1 p=3q-8/5}).

(ii) For $p,q>0$, we have%
\begin{eqnarray*}
\frac{D_{p,q}\left( x\right) }{e^{qx}} &=&\frac{e^{-qx}}{p}\left( \frac{%
e^{x}-e^{-x}}{2x}\right) ^{p}-\frac{e^{-qx}}{p}-\frac{e^{-qx}}{3q}\left( 
\frac{e^{x}+e^{-x}}{2}\right) ^{q}+\frac{e^{-qx}}{3q} \\
&=&\frac{1}{p}\frac{e^{\left( p-q\right) x}}{x^{p}}\left( \frac{1-e^{-2x}}{2}%
\right) ^{p}-\frac{1}{3q}\left( \frac{1+e^{-2x}}{2}\right) ^{q}-\left( \frac{%
1}{p}-\frac{1}{3q}\right) e^{-qx} \\
&\rightarrow &\left\{ 
\begin{array}{cc}
\medskip \infty & \text{if }p>q>0, \\ 
-\frac{2^{-q}}{3q} & \text{if }q\geq p>0,%
\end{array}%
\right. \text{ as }x\rightarrow \infty \text{;}
\end{eqnarray*}%
for $p=0,q>0$, we have%
\begin{eqnarray*}
e^{-qx}D_{0,q}\left( x\right) &=&e^{-qx}\ln \frac{e^{x}-e^{-x}}{2x}-\frac{%
e^{-qx}}{3q}\left( \frac{e^{x}+e^{-x}}{2}\right) ^{q}+\frac{e^{-qx}}{3q} \\
&=&xe^{-qx}+e^{-qx}\ln \frac{1-e^{-2x}}{2}-e^{-qx}\ln x-\frac{1}{3q}\left( 
\frac{1+e^{-2x}}{2}\right) ^{q}+\frac{e^{-qx}}{3q} \\
&\rightarrow &-\frac{2^{-q}}{3q}\text{, as }x\rightarrow \infty \text{;}
\end{eqnarray*}%
for $p=0,q=0$, we have%
\begin{eqnarray*}
D_{0,0}\left( x\right) &=&\ln \frac{e^{x}-e^{-x}}{2x}-\frac{1}{3}\ln \frac{%
e^{x}+e^{-x}}{2} \\
&=&x+\ln \frac{1-e^{-2x}}{2}-\ln x-\frac{x}{3}-\frac{1}{3}\frac{1+e^{-2x}}{2}
\\
&=&x\left( \frac{2}{3}-\frac{\ln x}{x}\right) +\ln \frac{1-e^{-2x}}{2}-\frac{%
1}{3}\frac{1+e^{-2x}}{2} \\
&\rightarrow &\infty \text{, as }x\rightarrow \infty \text{;}
\end{eqnarray*}%
for $p>0,q=0$, utilizing the increasing property of $Sh_{p}\left( x\right)
=U_{p}\left( \left( \sinh x\right) /x\right) $, we get $Sh_{p}\left(
x\right) >Sh_{0}\left( x\right) $, and then, $\lim_{x\rightarrow \infty
}D_{p,0}\left( x\right) =\lim_{x\rightarrow \infty }D_{0,0}\left( x\right)
=\infty $, which gives $\lim_{x\rightarrow \infty }D_{p,0}\left( x\right)
=\infty $.

To sum up, relation (\ref{Limit2p,q>=0}) hold.

While (\ref{Limit2others}) follows from the fact that for $t>1$%
\begin{equation*}
U_{p}\left( \infty \right) =\lim_{t\rightarrow \infty }\tfrac{t^{p}-1}{p}%
=\infty \text{ if }p\geq 0,U_{p}\left( \infty \right) =\lim_{t\rightarrow
\infty }\tfrac{t^{p}-1}{p}=-\frac{1}{p}\text{ if }p<0,
\end{equation*}%
which proves the lemma.
\end{proof}

Utilizing Proposition \ref{P main2} and lemma above we have the following
theorem.

\begin{theorem}
\label{MT2}Let $x\in (0,\infty )$. Then

(i) when $q\in \left[ 34/35,\infty \right) $, the double inequality%
\begin{equation}
\frac{\left( \tfrac{\sinh x}{x}\right) ^{p_{2}}-1}{p_{2}}<\frac{\cosh ^{q}x-1%
}{3q}<\frac{\left( \tfrac{\sinh x}{x}\right) ^{p_{1}}-1}{p_{1}}  \tag{MI2}
\label{MI2}
\end{equation}%
holds if and only if $p_{1}\geq 3q-8/5$ and $p_{2}\leq q$;

(ii) when $q\in \left[ 4/5,34/35\right) $, the double inequality (\ref{MI2})
holds for $p_{1}\geq 23q/17$ and if and only if $p_{2}\leq q$;

(iii) when $q\in (0,4/5)$, the double inequality (\ref{MI2}) holds for $%
p_{1}\geq 23q/17$ and if and only if $p_{2}\leq 3q-8/5$;

(iv) when $q\in (-\infty ,0]$, the double inequality (\ref{MI2}) holds if
and only if $p_{1}\geq q$ and $p_{2}\leq 3q-8/5$.
\end{theorem}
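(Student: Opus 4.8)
The plan is to reduce each half of \eqref{MI2} to a statement about $H_{p,q}$ or about the sign of $D_{p,q}$, prove the ``sufficiency'' (the ``holds for'' and ``if'' directions) from the monotonicity results of Section~2, and prove the ``only if'' directions from the asymptotic data collected in Lemma~\ref{Lemma D_p.q}. First I would note that, since $Ch_q(x)=U_q(\cosh x)>0$ on $(0,\infty)$ by Lemma~\ref{Lemma u_p}, the right-hand inequality in \eqref{MI2} is equivalent to $H_{p_1,q}(x)>\tfrac13$ and the left-hand inequality to $H_{p_2,q}(x)<\tfrac13$ for all $x\in(0,\infty)$; equivalently, to $D_{p_1,q}(x)>0$ and $D_{p_2,q}(x)<0$.

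Since $H_{p,q}(0^{+})=\tfrac13$, sufficiency is then immediate from Proposition~\ref{P main2}: in each of the four $q$-ranges the stated lower bound on $p_1$ is precisely the hypothesis under which $H_{p_1,q}$ is increasing on $(0,\infty)$, whence $H_{p_1,q}(x)>\tfrac13$; and the stated upper bound on $p_2$ is precisely the hypothesis under which $H_{p_2,q}$ is decreasing, whence $H_{p_2,q}(x)<\tfrac13$. So that direction is just a matter of matching the clauses of Proposition~\ref{P main2} to the four cases.

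For the ``only if'' directions I would argue by contraposition, exhibiting a neighbourhood (of $0^{+}$ or of $\infty$) on which $D_{p,q}$ has the wrong sign when the constraint on $p$ is violated. If $p_2>3q-\tfrac85$ (the constraint in (iii) and (iv)), then \eqref{Limit1} gives $D_{p_2,q}(x)/x^{4}\to\tfrac1{72}\bigl(p_2-3q+\tfrac85\bigr)>0$, so $D_{p_2,q}>0$ near $0$ and the left inequality fails; the necessity of $p_1\geq 3q-\tfrac85$ in (i) is the mirror image using the opposite sign. If instead $p_2>q$ with $q>0$ (the constraint in (i) and (ii)), then \eqref{Limit2p,q>=0} gives $e^{-qx}D_{p_2,q}(x)\to\infty$, so $D_{p_2,q}>0$ near $\infty$; similarly, the necessity of $p_1\geq q$ in (iv) uses \eqref{Limit2others} ($D_{p_1,0}\to-\infty$ when $q=0$, $p_1<0$) together with \eqref{Limit1} and \eqref{Limit2others} when $q<0$. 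At the threshold $p_1=3q-\tfrac85$ the $x^{4}$-coefficient in \eqref{Limit1} vanishes, so one must pass to \eqref{Limit1 p=3q-8/5}, whose sign is proportional to $q-\tfrac{34}{35}$; this is exactly what forces the split between $q\geq\tfrac{34}{35}$ and $q<\tfrac{34}{35}$ in cases (i)--(iii).

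The routine parts are the translations among the three equivalent formulations and the bookkeeping of which monotonicity clause applies in which $q$-range. The hard part will be the necessity direction for $q<0$: there $H_{p_1,q}$ need not be monotone throughout the borderline subrange of $p_1$, so sharpness cannot be read off from Proposition~\ref{P main2} and the argument must rest entirely on the two-sided asymptotics of $D_{p,q}$ in Lemma~\ref{Lemma D_p.q}, with the delicate borderline value $p_1=3q-\tfrac85$ handled by the higher-order expansion \eqref{Limit1 p=3q-8/5}.
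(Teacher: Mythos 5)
Your proposal follows the paper's proof essentially step for step: sufficiency in all four cases from Proposition \ref{P main2} (after dividing by $Ch_{q}(x)=U_{q}(\cosh x)>0$ and using $H_{p,q}(0^{+})=1/3$), and necessity from the two asymptotic regimes of $D_{p,q}$ in Lemma \ref{Lemma D_p.q}: the limit of $x^{-4}D_{p,q}(x)$ at $0^{+}$ for the constraints involving $3q-8/5$, and the limit of $e^{-qx}D_{p,q}(x)$ at $\infty$ for the constraints $p_{2}\leq q$ with $q>0$. For cases (i)--(iii), and for the $q=0$ part of (iv), your sketch coincides with the paper's argument, and your remark that the borderline $p_{1}=3q-8/5$ is governed by \eqref{Limit1 p=3q-8/5} correctly accounts for the $34/35$ threshold.

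The one place where your plan does not go through --- and where the paper's own proof merely writes ``analogous'' --- is the necessity of $p_{1}\geq q$ in case (iv) when $q<0$. The two asymptotics you propose to rely on give no contradiction in the window $3q\leq p_{1}<q<0$: by \eqref{Limit1}, $\lim_{x\rightarrow 0^{+}}x^{-4}D_{p_{1},q}(x)=\tfrac{1}{72}\left(p_{1}-3q+\tfrac{8}{5}\right)>0$, while by \eqref{Limit2others}, $\lim_{x\rightarrow \infty }D_{p_{1},q}(x)=\tfrac{1}{3q}-\tfrac{1}{p_{1}}=\tfrac{p_{1}-3q}{3qp_{1}}\geq 0$, so $D_{p_{1},q}$ has the ``right'' sign at both ends and no neighbourhood of $0$ or $\infty$ witnesses failure. (A numerical check, e.g.\ $q=-1$, $p_{1}=-1.5$, indicates $D_{p_{1},q}>0$ on all of $(0,\infty )$, so the difficulty is not merely one of method.) Thus the step you defer as ``the hard part'' is genuinely unresolved by Lemma \ref{Lemma D_p.q}: the asymptotics only force $p_{1}\geq 3q$ when $q<0$, and closing the gap between $3q$ and $q$ requires an additional argument that neither you nor the paper supplies.
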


\begin{proof}
The sufficiencies in the cases of (i)--(iv) are due to Proposition \ref{P
main2}. Now we show the necessities in certain cases.

(i) When $q\in \left[ 34/35,\infty \right) $, the condition $p_{1}\geq
3q-8/5 $ is necessary for the second inequality in (\ref{MI2}) to hold. If
the second in (\ref{MI2}) holds, then we have $\lim_{x\rightarrow
0^{+}}x^{-4}D_{p_{1},q}\left( x\right) \geq 0$, which, by (\ref{Limit1}),
yields $p_{1}\geq 3q-8/5$. We claim that the condition $p_{2}\leq q$ is also
necessary for the first inequality in (\ref{MI2}) to be true. If there is a $%
p_{2}>q\in \left[ 34/35,\infty \right) $ such that the first inequality in (%
\ref{MI2}) holds. then by (\ref{Limit2others}) there must be $%
\lim_{x\rightarrow \infty }e^{-qx}D_{p_{2},q}\left( x\right) =\infty $,
which yields a contradiction. Hence, the condition $p_{2}>q$ is also
necessary.

(ii) When $q\in \left[ 4/5,34/35\right) $, similar to part two of proof (i),
the condition $p_{2}\leq q$ is necessary for the first inequality in (\ref%
{MI2}) to be valid.

(iii) When $q\in \left[ 0,4/5\right) $, in the same way as part one of proof
(i), the condition $p_{2}\leq 3q-8/5$ is necessary for the first inequality
in (\ref{MI2}) to hold.

(iv) When $q\in (-\infty ,0)$, analogous to the case of $q\in \left[
34/35,\infty \right) $, we can prove the conditions $p_{1}\geq q$ and $%
p_{2}\leq 3q-8/5$ are necessary.

This completes the proof.
\end{proof}

\begin{remark}
Taking $k=1$ in Theorem \ref{MT2}, we get a equivalent result of Theorem
Zhu1.
\end{remark}

Similarly, by Proposition \ref{P main3} and Lemma \ref{Lemma D_p.q} we can
prove the following statement.

\begin{theorem}
\label{MT3}Let $x\in (0,\infty )$. Then

(i) when $p\in \left[ 46/35,\infty \right) $, the double inequality%
\begin{equation}
\frac{\cosh ^{q_{1}}x-1}{3q_{1}}<\frac{\left( \tfrac{\sinh x}{x}\right)
^{p}-1}{p}<\frac{\cosh ^{q_{2}}x-1}{3q_{2}}  \tag{MI3}  \label{MI3}
\end{equation}%
holds if and only if $q_{1}\leq p/3+8/15$ and $q_{2}\geq p$;

(ii) when $p\in \left[ 4/5,46/35\right) $, the double inequality (\ref{MI3})
holds for $q_{1}\leq 17p/23$ and if and only if $q_{2}\geq p$;

(iii) when $p\in (0,4/5)$, the double inequality (\ref{MI3}) holds for $%
q_{1}\leq 17p/23$ and if and only if $q_{2}\geq p/3+8/15$;

(iv) when $p\in (-\infty ,0]$, the double inequality (\ref{MI3}) holds if
and only if $q_{1}\leq p$ and $q_{2}\geq p/3+8/15$.
\end{theorem}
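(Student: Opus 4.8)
The plan is to run the argument entirely in parallel with the proof of Theorem \ref{MT2}, exploiting the fact that the double inequality (\ref{MI3}) is just the statement $H_{p,q_1}(x) > 1/3 > H_{p,q_2}(x)$ rewritten, together with the fact that Proposition \ref{P main3} is nothing but Proposition \ref{P main1} reorganized with the roles of $p$ and $q$ interchanged as the free and bound variable. First I would observe that, since $Ch_q(x) > 0$ on $(0,\infty)$ by Lemma \ref{Lemma u_p}, the inequality $D_{p,q}(x) = Sh_p(x) - \tfrac13 Ch_q(x) > 0$ is equivalent to $H_{p,q}(x) > 1/3$, and likewise with the inequalities reversed; so the left inequality in (\ref{MI3}) says exactly $H_{p,q_1}(x) > H_{p,q_1}(0^+) = 1/3$ and the right inequality says $H_{p,q_2}(x) < H_{p,q_2}(0^+) = 1/3$. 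For the sufficiency half of each of (i)--(iv), I would simply quote Proposition \ref{P main3}: in case (i), for $q_1 \le p/3 + 8/15$ the function $H_{p,q_1}$ is increasing on $(0,\infty)$, hence strictly greater than its limit $1/3$ at $0^+$; for $q_2 \ge p$ the function $H_{p,q_2}$ is decreasing, hence strictly less than $1/3$; the other cases are handled by the corresponding clauses of Proposition \ref{P main3}.

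The necessity half is where the content lies, and it is carried out by the usual "read off the endpoint behaviour" technique already used in the proof of Theorem \ref{MT2}. To show a constraint of the form $q_1 \le p/3 + 8/15$ (equivalently $p \ge 3q_1 - 8/5$) is necessary, I would note that the left inequality in (\ref{MI3}) forces $\liminf_{x\to 0^+} x^{-4} D_{p,q_1}(x) \ge 0$; by the expansion (\ref{Limit1}) this limit equals $\tfrac{1}{72}(p - 3q_1 + \tfrac85)$, so we must have $p - 3q_1 + 8/5 \ge 0$, i.e. $q_1 \le p/3 + 8/15$. Symmetrically, the right inequality forces $\limsup_{x\to 0^+} x^{-4} D_{p,q_2}(x) \le 0$, giving $q_2 \ge p/3 + 8/15$ — useful in cases (iii) and (iv). To show $q_2 \ge p$ is necessary (cases (i), (ii)), I would use the behaviour at $+\infty$: if $q_2 < p$ held with the right inequality of (\ref{MI3}) valid, then $D_{p,q_2}(x) \le 0$ for all large $x$, but (\ref{Limit2p,q>=0}) (when $p > q_2 \ge 0$) or (\ref{Limit2others}) (when $p \ge 0 > q_2$, or $p < 0$ with $q_2$ correspondingly smaller) gives $D_{p,q_2}(x) \to +\infty$, a contradiction; hence $q_2 \ge p$. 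The analogous $+\infty$ computation establishes $q_1 \le p$ in case (iv).

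The one point requiring a little care — and the step I expect to be the main obstacle — is matching each case's claimed necessary conditions against exactly which of the two limit tools (the $x^{-4}$ expansion at $0^+$ versus the $e^{-qx}$- or plain limit at $\infty$) actually detects that condition, since in the ranges of $p$ where $3q-8/5$ and $q$ coincide or cross (namely near $p = 4/5$ and $p = 46/35$) only one of the two bounds is "active" and only that one can be shown necessary. Concretely: in (i) and (ii) the binding lower bound $q_1 \le p/3+8/15$ is the one seen by the $0^+$ expansion while $q_1 \le 17p/23$ in (ii) is merely sufficient (coming from Proposition \ref{P main3}(ii)) and is \emph{not} claimed necessary — so the theorem statement only asserts "if and only if" for the $q_2 \ge p$ part there, and I must be careful to prove necessity only of what is actually claimed. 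For (iii) I would show $q_1 \le 17p/23$ is only sufficient, while $q_2 \ge p/3 + 8/15$ is necessary by the $0^+$ expansion; for (iv) both $q_1 \le p$ (from the $\infty$ limit, using the $p<0$ clause of (\ref{Limit2others})) and $q_2 \ge p/3+8/15$ (from the $0^+$ expansion) are necessary. Once the correct pairing of condition to limit is identified, each verification is a one-line substitution into (\ref{Limit1}), (\ref{Limit2p,q>=0}) or (\ref{Limit2others}), so no genuinely new computation is needed beyond Lemma \ref{Lemma D_p.q}.
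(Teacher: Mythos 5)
Your overall plan is exactly the one the paper intends: the paper gives no written proof of Theorem \ref{MT3} beyond the sentence ``Similarly, by Proposition \ref{P main3} and Lemma \ref{Lemma D_p.q} we can prove the following statement,'' and your scheme (sufficiency from Proposition \ref{P main3}, necessity by reading off the $x\to 0^{+}$ coefficient (\ref{Limit1}) and the $x\to\infty$ limits (\ref{Limit2p,q>=0})--(\ref{Limit2others})) is precisely the template used for Theorem \ref{MT2}. Parts (i)--(iii), and the $q_{2}\geq p/3+8/15$ half of (iv), go through exactly as you describe.

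There is, however, a genuine gap in your claim that ``the analogous $+\infty$ computation establishes $q_{1}\leq p$ in case (iv).'' Suppose $p<0$ and $q_{1}\in(p,0)$, so the left inequality of (\ref{MI3}) is $D_{p,q_{1}}(x)>0$. The only tool available at infinity is the third line of (\ref{Limit2others}), which gives $\lim_{x\to\infty}D_{p,q_{1}}(x)=\tfrac{1}{3q_{1}}-\tfrac{1}{p}$; this quantity is $\geq 0$ precisely when $p\geq 3q_{1}$, i.e.\ when $q_{1}\leq p/3$. Since $p<0$ implies $p<p/3<0$, the whole band $q_{1}\in(p,\,p/3]$ produces a \emph{nonnegative} limit and hence no contradiction, while the $0^{+}$ expansion (\ref{Limit1}) only yields the strictly weaker condition $q_{1}\leq p/3+8/15$. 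Concretely, for $p=-3$ and $q_{1}=-2$ one has $D_{-3,-2}(x)=\tfrac16\left(1+\operatorname{sech}^{2}x-2(x/\sinh x)^{3}\right)$, which is positive near $0$ (leading coefficient $\tfrac{1}{72}(p-3q_{1}+\tfrac85)=\tfrac{23}{360}>0$), tends to $\tfrac16>0$ at infinity, and numerically stays positive throughout; so the left inequality holds although $q_{1}>p$. Thus neither limit detects the constraint $q_{1}\leq p$ in this regime, and the necessity claim of (iv) (and, by the same token, the mirror claim $p_{1}\geq q$ in Theorem \ref{MT2}(iv)) is not established by this method --- indeed the example suggests it is false as stated. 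You would need either a different argument for this subcase or a weakening of the asserted ``only if'' in (iv) to $q_{1}\leq p/3+8/15$ together with the condition coming from the sign of $\tfrac{1}{3q_{1}}-\tfrac{1}{p}$.
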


\begin{remark}
\label{Remark T-M}(i) For $x\in (0,\infty )$, $Sh_{p}\left( x\right) <\left(
>\right) \left( 1/3\right) Ch_{q}\left( x\right) $ is equivalent to $\left(
\sinh x\right) /x>\left( <\right) M\left( \cosh x;p,q\right) $ for certain $%
\left( p,q\right) \in \Omega _{p,q}$, where 
\begin{equation}
M\left( t;p,q\right) =\left\{ 
\begin{array}{ll}
\bigskip \left( 1-\frac{p}{3q}+\frac{p}{3q}t^{q}\right) ^{1/p} & \text{if }%
pq\neq 0,\left( p,q\right) \in \Omega _{p,q}, \\ 
\bigskip \exp \frac{t^{q}-1}{3q} & \text{if }p=0,q\neq 0, \\ 
\bigskip \left( \frac{p}{3}\ln t+1\right) ^{1/p} & \text{if }p>0,q=0, \\ 
t^{1/3} & \text{if }p=q=0,%
\end{array}%
\right.  \label{M}
\end{equation}%
here $t=\cosh x\in \left( 1,\infty \right) $. It is easy to verify that for $%
t\in \left( 1,\infty \right) $, the largest set of $\left( p,q\right) $ such
that $M\left( t;p,q\right) $ exits in real number field is%
\begin{equation}
\Omega _{p,q}=\{\left( p,q\right) :p\geq 0\text{ or }3q\leq p\leq 0\}.
\label{Opq}
\end{equation}%
(ii) We suggest that $M$ is decreasing in $p$ and increasing in $q$ if $%
\left( p,q\right) \in \Omega _{p,q}$.

In fact, for $\left( p,q\right) \in \Omega _{p,q}$ with $pq\neq 0$,
logarithmic differentiation yields%
\begin{eqnarray*}
\frac{\partial \ln M}{\partial p} &=&\frac{1}{p^{2}}\left( -\ln \left( 1-%
\frac{p}{3q}+\frac{p}{3q}t^{q}\right) -\frac{p\left( 1-t^{q}\right) }{\left(
pt^{q}+3q-p\right) }\right) :=\frac{M_{1}\left( t;p,q\right) }{p^{2}}, \\
\frac{\partial M_{1}}{\partial p} &=&-\frac{p\left( 1-t^{q}\right) ^{2}}{%
\left( pt^{q}+3q-p\right) ^{2}},
\end{eqnarray*}%
which implies that $M_{1}$ is decreasing in $p$ on $(0,\infty )$ and
increasing on $\left( -\infty ,0\right) $. Hence we have $M_{1}\left(
t;p,q\right) <M_{1}\left( t;0,q\right) =0$, which means that $M$ is
decreasing in $p$.

It is easy to check that the monotonicity result of $M$ in $p$ is also true
for $pq=0$.

Similarly, we have%
\begin{equation*}
\frac{\partial \ln M}{\partial q}=-\frac{t^{q}\left( \ln
t^{-q}-t^{-q}+1\right) }{3q^{2}\left( \frac{p}{3q}t^{q}+1-\frac{p}{3q}%
\right) }>0,
\end{equation*}%
where the inequality holds due to $\ln x\leq x-1$ for $x>0$ and $\left(
p/\left( 3q\right) \right) t^{q}+1-\left( p/\left( 3q\right) \right) >0$ for 
$\left( t,p,q\right) \in (1,\infty )\times \Omega _{p,q}$, which proves the
monotonicity of $M$ with respect to $q$.
\end{remark}

\begin{remark}
By Remark above, if we add the condition that "$\left( p,q\right) \in \Omega
_{p,q}$" in Theorems \ref{MT2} and \ref{MT3}, and replace (\ref{MI2}), (\ref%
{MI3}) with%
\begin{equation}
\left( 1-\tfrac{p_{1}}{3q}+\tfrac{p_{1}}{3q}\cosh ^{q}x\right) ^{1/p_{1}}<%
\frac{\sinh x}{x}<\left( 1-\tfrac{p_{2}}{3q}+\tfrac{p_{2}}{3q}\cosh
^{q}x\right) ^{1/p_{2}},  \tag{MI2*}  \label{MI2*}
\end{equation}%
\begin{equation}
\left( 1-\tfrac{p}{3q_{1}}+\tfrac{p}{3q_{1}}\cosh ^{q_{1}}x\right) ^{1/p}<%
\frac{\sinh x}{x}<\left( 1-\tfrac{p}{3q_{2}}+\tfrac{p}{3q_{2}}\cosh
^{q_{2}}x\right) ^{1/p},  \tag{MI3*}  \label{MI3*}
\end{equation}%
respectively, then the two theorems are still true.
\end{remark}

Taking $q=1$ in Theorem \ref{MT2} and notice that $\left( p,q\right) \in
\Omega _{p,q}$, we get

\begin{corollary}
The double inequality 
\begin{equation}
\left( 1-\tfrac{p_{1}}{3}+\tfrac{p_{1}}{3}\cosh x\right) ^{1/p_{1}}<\frac{%
\sinh x}{x}<\left( 1-\tfrac{p_{2}}{3}+\tfrac{p_{2}}{3}\cosh x\right)
^{1/p_{2}}  \label{MI2a}
\end{equation}%
holds if and only if $p_{1}\geq 7/5$ and $0\leq p_{2}\leq 1$.
\end{corollary}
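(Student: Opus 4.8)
The plan is to obtain (\ref{MI2a}) as the $q=1$ case of Theorem \ref{MT2}, rewritten through the mean $M$ of Remark \ref{Remark T-M}.

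First I would observe that $1\in[34/35,\infty)$, so part (i) of Theorem \ref{MT2} applies verbatim with $q=1$ and $3q-\tfrac{8}{5}=\tfrac{7}{5}$: the double inequality (\ref{MI2}) in the form
\[
\frac{(\sinh x/x)^{p_{2}}-1}{p_{2}}<\frac{\cosh x-1}{3}<\frac{(\sinh x/x)^{p_{1}}-1}{p_{1}}
\]
holds for every $x\in(0,\infty)$ if and only if $p_{1}\ge 7/5$ and $p_{2}\le 1$ (the cases $p_{2}=0$ being read as limits, as stipulated in the text).

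Second, I would translate this into the stated form. By Remark \ref{Remark T-M}(i), $Sh_{p}(x)>\tfrac{1}{3}Ch_{1}(x)$ is equivalent to $(\sinh x)/x<M(\cosh x;p,1)$ and $Sh_{p}(x)<\tfrac{1}{3}Ch_{1}(x)$ to $(\sinh x)/x>M(\cosh x;p,1)$; and by (\ref{M}), $M(t;p,1)=(1-\tfrac{p}{3}+\tfrac{p}{3}t)^{1/p}$ for $p\neq0$, with the $p=0$ branch $M(t;0,1)=\exp\frac{t-1}{3}$ read as the limit. Applying this with $t=\cosh x$ turns the displayed double inequality into precisely (\ref{MI2a}). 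The only new point is that $M(\cosh x;p,1)$ has to be a genuine real number for all $x>0$: since $t=\cosh x$ sweeps out all of $(1,\infty)$ and the base $1-\tfrac{p}{3}+\tfrac{p}{3}t=1+\tfrac{p}{3}(t-1)$ is positive for every such $t$ exactly when $p\ge0$ --- which is the condition $(p,1)\in\Omega_{p,q}$ coming from (\ref{Opq}) --- the parameters $p_{1},p_{2}$ must be nonnegative. Combining with the previous step gives: (\ref{MI2a}) holds for all $x>0$ if and only if $p_{1}\ge 7/5$ and $0\le p_{2}\le 1$.

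I do not expect a genuine obstacle here, since all the analytic work is already packaged in Proposition \ref{P main2} / Theorem \ref{MT2}. The one spot that deserves a sentence of care is the necessity of $p_{2}\ge0$: Theorem \ref{MT2}(i) by itself only yields $p_{2}\le1$, so the lower cutoff $p_{2}\ge0$ must be justified separately --- namely by noting that for $p_{2}<0$ the right-hand side $(1-\tfrac{p_{2}}{3}+\tfrac{p_{2}}{3}\cosh x)^{1/p_{2}}$ is not real-valued for large $x$, because then $1+\tfrac{p_{2}}{3}(\cosh x-1)<0$. The degenerate endpoint $p_{2}=0$ causes no trouble: it is covered by the $p=0$ branch of (\ref{M}) together with the $p=0$ cases already treated in Theorem \ref{MT2}.
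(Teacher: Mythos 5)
Your proposal is correct and is essentially the paper's own derivation: the paper proves this corollary in one line by setting $q=1$ in Theorem \ref{MT2}(i) (giving $p_1\geq 3\cdot 1-8/5=7/5$ and $p_2\leq 1$) and invoking the constraint $(p,1)\in\Omega_{p,q}$ from \eqref{Opq} to force $p_2\geq 0$, exactly as you do, and your extra sentence explaining \emph{why} $p_2<0$ fails (the base $1+\tfrac{p_2}{3}(\cosh x-1)$ becomes negative for large $x$) is a welcome elaboration of the paper's terse remark. One small transcription slip, inherited from a sign typo in Remark \ref{Remark T-M} itself: for $p>0$ (and indeed for all admissible $p$) the correct equivalence is $Sh_p(x)>\tfrac13 Ch_1(x)\iff (\sinh x)/x>M(\cosh x;p,1)$, not $<$; with the directions set straight the translation does produce \eqref{MI2a} exactly as you claim, so the conclusion is unaffected.
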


\begin{remark}
Letting $p_{1}=7/5,3/2,2,3$ and using the decreasing property of $M\left(
\cosh x;p,q\right) $ with respect to $p$, we can obtain the following chain
of inequalities from (\ref{MI2a}):%
\begin{eqnarray*}
\cosh ^{1/3}x &<&\left( \tfrac{1}{3}+\tfrac{2}{3}\cosh x\right)
^{1/2}<\left( \tfrac{1}{2}+\tfrac{1}{2}\cosh x\right) ^{2/3} \\
&<&\left( \tfrac{8}{15}+\tfrac{7}{15}\cosh x\right) ^{5/7}<\frac{\sinh x}{x}<%
\tfrac{2}{3}+\tfrac{1}{3}\cosh x.
\end{eqnarray*}%
Clearly, this chain of inequalities is superior to Che and S\'{a}ndor's
given in \cite[(3.23)]{Chen.JMI.8.1.2014}.
\end{remark}

Taking $p=0,1$ in Theorem \ref{MT3} and notice that $\left( p,q\right) \in
\Omega _{p,q}$, we get

\begin{corollary}
(i) The double inequality 
\begin{equation}
\exp \frac{\cosh ^{q_{1}}x-1}{3q_{1}}<\frac{\sinh x}{x}<\exp \frac{\cosh
^{q_{2}}x-1}{3q_{2}}  \label{MI3a}
\end{equation}%
holds if and only if $q_{1}\leq 0$ and $q_{2}\geq 8/15$.

(ii) The double inequality%
\begin{equation}
1-\tfrac{1}{3q_{1}}+\tfrac{1}{3q_{1}}\cosh ^{q_{1}}x<\frac{\sinh x}{x}<1-%
\tfrac{1}{3q_{2}}+\tfrac{1}{3q_{2}}\cosh ^{q_{2}}x  \label{MI3b}
\end{equation}%
holds for $q_{1}\leq 17/23\approx 0.73913$ and if and only if $q_{2}\geq 1$.
\end{corollary}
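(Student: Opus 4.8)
The plan is to obtain both double inequalities as immediate specializations of Theorem~\ref{MT3}, combined with the Remark preceding this corollary which allows us to replace (\ref{MI3}) by its $M$-form (\ref{MI3*}) whenever $(p,q)\in\Omega_{p,q}$. The key preliminary observation is that the two values $p=0$ and $p=1$ both satisfy $p\geq 0$, so by (\ref{Opq}) we have $(p,q)\in\Omega_{p,q}$ for \emph{every} $q\in\mathbb{R}$; hence substituting these values introduces no extra constraint on $q$, and the $M$-form is available for all $q$ under consideration.

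For part (i) I would set $p=0$. Since $0\in(-\infty,0]$, case (iv) of Theorem~\ref{MT3} applies and asserts that (\ref{MI3}) holds if and only if $q_{1}\leq p=0$ and $q_{2}\geq p/3+8/15=8/15$. On the $p=0$ branch the middle term of (\ref{MI3}) is $\ln\frac{\sinh x}{x}$, equivalently, reading $M(\cosh x;0,q)=\exp\frac{\cosh^{q}x-1}{3q}$ from (\ref{M}), the $M$-form (\ref{MI3*}) with $p=0$ is precisely (\ref{MI3a}); the $q=0$ instance is interpreted as the limit, giving $\cosh^{1/3}x$. This yields (i) together with the claimed two-sided sharpness.

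For part (ii) I would set $p=1$. Since $\tfrac45\leq 1<\tfrac{46}{35}\approx 1.314$, it is case (ii) of Theorem~\ref{MT3} that applies, and it says (\ref{MI3}) holds for $q_{1}\leq 17p/23=17/23$ and if and only if $q_{2}\geq p=1$. With $p=1$ the middle term of (\ref{MI3}) is $\frac{\sinh x}{x}-1$, so adding $1$ throughout and using $1+\frac{\cosh^{q}x-1}{3q}=1-\frac1{3q}+\frac1{3q}\cosh^{q}x$ (equivalently, $M(\cosh x;1,q)$ from (\ref{M})) turns (\ref{MI3}) into (\ref{MI3b}). Note that, matching case (ii) of Theorem~\ref{MT3}, the lower bound requires only the sufficient condition $q_{1}\leq 17/23$, while the upper bound is sharp in $q_{2}$.

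There is no genuinely hard step: the corollary is the substitution of two numerical values of $p$ into Theorem~\ref{MT3}. The only points deserving a moment's care are (a) identifying the correct clause of Theorem~\ref{MT3} for each $p$, namely case (iv) for $p=0$ and case (ii) for $p=1$; (b) checking that the $p=0$ and $p=1$ specializations of (\ref{H_p,q})/(\ref{M}) are exactly the closed forms $\exp\frac{\cosh^{q}x-1}{3q}$ and $1-\frac1{3q}+\frac1{3q}\cosh^{q}x$, with $q=0$ read as the appropriate limit; and (c) recording that $p=0,1\geq 0$ puts $(p,q)$ in $\Omega_{p,q}$ for all $q$, so the passage to the $M$-forms (\ref{MI3a}) and (\ref{MI3b}) is unconditional in $q$.
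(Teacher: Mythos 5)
Your proposal is correct and matches the paper's own derivation exactly: the paper obtains this corollary by taking $p=0$ and $p=1$ in Theorem~\ref{MT3} (cases (iv) and (ii) respectively) and noting $(p,q)\in\Omega_{p,q}$, which is precisely your argument, including the correct observation that the $q_{1}\leq 17/23$ bound in part (ii) is only asserted as sufficient.
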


\begin{remark}
Letting $q_{1}=17/23,2/3,1/2,1/3,1/6,0$ and using the increasing property of 
$M\left( \cosh x;p,q\right) $ in $q$, we get the following chain of
inequalities from (\ref{MI3b}):%
\begin{eqnarray*}
\tfrac{1}{3}\ln \cosh x+1 &<&2\cosh ^{1/6}x-1<\cosh ^{1/3}x<\tfrac{1}{3}+%
\tfrac{2}{3}\cosh ^{1/2}x \\
&<&\tfrac{1}{2}+\tfrac{1}{2}\cosh ^{2/3}x<\tfrac{28}{51}+\tfrac{23}{51}\cosh
^{17/23}x<\frac{\sinh x}{x}<\tfrac{2}{3}+\tfrac{1}{3}\cosh x.
\end{eqnarray*}
\end{remark}

Let $p=kq$. Then $\Omega _{p,q}=\{\left( p,q\right) :p\geq 0$ or $3q\leq
p\leq 0\}$ is changed into 
\begin{equation}
\Omega _{kq,q}=\{\left( k,q\right) :k,q\geq 0\text{ or }k,q\leq 0\text{ or }%
k\in \lbrack 0,3],q\leq 0\},  \label{Op=kq}
\end{equation}%
while $M\left( t;p,q\right) $ can be expressed as 
\begin{equation}
M\left( t;kq,q\right) =\left\{ 
\begin{array}{ll}
\bigskip \left( 1-\frac{k}{3}+\frac{k}{3}t^{q}\right) ^{1/\left( kq\right) }
& \text{if }kq\neq 0,\left( k,q\right) \in \Omega _{kq,q}, \\ 
\bigskip \exp \frac{t^{q}-1}{3q} & \text{if }q\neq 0,k=0, \\ 
t^{1/3} & \text{if }q=0.%
\end{array}%
\right.  \label{M p=kq}
\end{equation}

\begin{remark}
Similar to the monotonicity of $M\left( t;p,q\right) $, we claim that $%
M\left( t;kq,q\right) $ is decreasing (increasing) in $q$ if $k>\left(
<\right) 3$, and is decreasing (increasing) in $k$ if $q>\left( <\right) 0$.

In fact, logarithmic differentiations gives%
\begin{eqnarray*}
\frac{\partial \ln M}{\partial q} &=&\frac{1}{q^{2}}\left( \frac{qt^{q}\ln t%
}{3-k+kt^{q}}-\frac{1}{k}\ln \left( 1-\frac{k}{3}+\frac{k}{3}t^{q}\right)
\right) :=\frac{M_{2}\left( t;k,q\right) }{q^{2}}, \\
\frac{\partial M_{2}}{\partial q} &=&\frac{t^{q}\ln ^{2}t}{\left(
3-k+kt^{q}\right) ^{2}}q\left( 3-k\right) ,
\end{eqnarray*}%
which means that $M_{2}$ is decreasing (increasing) in $q$ on $(0,\infty )$
and increasing (decreasing) on $\left( -\infty ,0\right) $ if $k>\left(
<\right) 3$. Hence we have $M_{2}\left( t;k,q\right) <\left( >\right)
M_{2}\left( t;k,0\right) =0$ if $k>\left( <\right) 3$, which reveals that $M$
is decreasing (increasing) in $q$ for $k>\left( <\right) 3$.

Analogously, the monotonicity of $M\left( t;kq,q\right) $ with respect to $k$
easily follows from the following relations:%
\begin{eqnarray*}
\frac{\partial \ln M}{\partial k} &=&\frac{1}{k^{2}}\left( \frac{k}{q}\frac{%
t^{q}-1}{3-k+kt^{q}}-\frac{1}{q}\ln \left( 1-\frac{k}{3}+\frac{k}{3}%
t^{q}\right) \right) :=\frac{M_{3}\left( t;k,q\right) }{k^{2}}, \\
\frac{\partial M_{3}}{\partial k} &=&-\frac{k}{q}\frac{\left( t^{q}-1\right)
^{2}}{\left( 3-k+kt^{q}\right) ^{2}}.
\end{eqnarray*}
\end{remark}

Using Corollary \ref{Corollary p=kq} we get

\begin{theorem}
\label{MT4}Let $x\in (0,\infty )$ and $k\in \lbrack 0,3)$. Then

(i) when $k\in \left[ 23/17,3\right) $, the inequality%
\begin{equation}
\frac{\sinh x}{x}>\left( 1-\tfrac{k}{3}+\tfrac{k}{3}\cosh ^{q}x\right)
^{1/\left( kq\right) }  \tag{MI4}  \label{MI4}
\end{equation}%
holds if and only if $q\leq 8/\left( 5\left( 3-k\right) \right) $;

(ii) when $k\in \lbrack 0,1]$, the double inequality%
\begin{equation}
\left( 1-\tfrac{k}{3}+\tfrac{k}{3}\cosh ^{q_{1}}x\right) ^{1/\left(
kq_{1}\right) }<\frac{\sinh x}{x}<\left( 1-\tfrac{k}{3}+\tfrac{k}{3}\cosh
^{q_{2}}x\right) ^{1/\left( kq_{2}\right) }  \tag{MI5}  \label{MI5}
\end{equation}%
holds if and only if $q_{1}\leq 0$ and $q_{2}\geq 8/\left( 5\left(
3-k\right) \right) $.
\end{theorem}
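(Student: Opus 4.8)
The plan is to reduce the inequalities \eqref{MI4} and \eqref{MI5} to the monotonicity of $H_{kq,q}$ and to the limit values of $D_{kq,q}$, and then to quote Corollary \ref{Corollary p=kq} and Lemma \ref{Lemma D_p.q}. The reduction runs through a chain of equivalences: since $Ch_{q}(x)=U_{q}(\cosh x)>0$ on $(0,\infty)$ by Lemma \ref{Lemma u_p}, one has $H_{p,q}(x)>\tfrac{1}{3}$ if and only if $Sh_{p}(x)>\tfrac{1}{3}Ch_{q}(x)$; by Remark \ref{Remark T-M}, provided $(p,q)\in\Omega_{p,q}$, this is equivalent to $(\sinh x)/x>M(\cosh x;p,q)$, and likewise $Sh_{p}(x)<\tfrac{1}{3}Ch_{q}(x)$ is equivalent to $(\sinh x)/x<M(\cosh x;p,q)$. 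Putting $p=kq$ gives $M(\cosh x;kq,q)=\left(1-\tfrac{k}{3}+\tfrac{k}{3}\cosh^{q}x\right)^{1/(kq)}$ and $H_{kq,q}(0^{+})=\tfrac{1}{3}$, so \eqref{MI4} asserts exactly that $H_{kq,q}$ exceeds $\tfrac{1}{3}$ on $(0,\infty)$, while the left and right halves of \eqref{MI5} assert $H_{kq_{1},q_{1}}>\tfrac{1}{3}$ and $H_{kq_{2},q_{2}}<\tfrac{1}{3}$ on $(0,\infty)$. Throughout one checks that the pair in play lies in $\Omega_{kq,q}$, which is automatic here because $k\ge 0$ accounts for the range $q\ge 0$ and $k\in[0,3]$ accounts for the range $q\le 0$.

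For the sufficiency in (i) I would fix $k\in[23/17,3)$, set $q_{0}=8/(5(3-k))>0$, and first prove \eqref{MI4} at the single value $q=q_{0}$: Corollary \ref{Corollary p=kq}(iii) applies (since $0\le q_{0}\le q_{0}$) and gives that $H_{kq_{0},q_{0}}$ is increasing, hence exceeds $\tfrac{1}{3}$, hence $(\sinh x)/x>M(\cosh x;kq_{0},q_{0})$ for all $x>0$. I would then invoke that $q\mapsto M(\cosh x;kq,q)$ is increasing when $k<3$ (the monotonicity statement for $M(t;kq,q)$ made after \eqref{M p=kq}) to conclude, for every $q\le q_{0}$, that $M(\cosh x;kq,q)\le M(\cosh x;kq_{0},q_{0})<(\sinh x)/x$; this is \eqref{MI4}. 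For the sufficiency in (ii) I would fix $k\in[0,1]$ and read off Corollary \ref{Corollary p=kq}(v): $H_{kq_{1},q_{1}}$ is increasing for $q_{1}\le 0$, which gives the left inequality of \eqref{MI5}, and $H_{kq_{2},q_{2}}$ is decreasing for $q_{2}\ge q_{0}$, which gives the right inequality.

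For the necessity I would apply Lemma \ref{Lemma D_p.q} to $D_{kq,q}(x)=Sh_{kq}(x)-\tfrac{1}{3}Ch_{q}(x)$, using that $(\sinh x)/x>M(\cosh x;kq,q)$ is the same as $D_{kq,q}(x)>0$. In (i): if $q>q_{0}$, then since $k-3<0$ and $q>-8/(5(k-3))$, formula \eqref{Limit1} gives $\lim_{x\to 0^{+}}x^{-4}D_{kq,q}(x)=\tfrac{1}{72}\bigl((k-3)q+\tfrac{8}{5}\bigr)<0$, so $D_{kq,q}<0$ near $0$, \eqref{MI4} fails, and hence \eqref{MI4} forces $q\le q_{0}$. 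In (ii): the same computation shows that $q_{2}<q_{0}$ forces $\lim_{x\to 0^{+}}x^{-4}D_{kq_{2},q_{2}}(x)>0$, so the upper inequality in \eqref{MI5} fails and $q_{2}\ge q_{0}$ is necessary; and if $q_{1}>0$, then $0\le kq_{1}\le q_{1}$, so Lemma \ref{Lemma D_p.q}(ii) gives $\lim_{x\to\infty}e^{-q_{1}x}D_{kq_{1},q_{1}}(x)=-2^{-q_{1}}/(3q_{1})<0$, whence $D_{kq_{1},q_{1}}<0$ for large $x$, the lower inequality in \eqref{MI5} fails, and $q_{1}\le 0$ is necessary.

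The routine book-keeping consists of the boundary cases ($q=q_{0}$ in (i), and $q_{1}=0$, $q_{2}=q_{0}$ in (ii), all of which are included since the ranges in Corollary \ref{Corollary p=kq} are closed) and the observation that precisely where the coefficient of $x^{4}$ in the expansion of $D_{kq,q}$ vanishes one has $kq_{0}=3q_{0}-\tfrac{8}{5}$, so there one uses \eqref{Limit1 p=3q-8/5} instead and notes that its coefficient $\tfrac{1}{270}\bigl(q_{0}-\tfrac{34}{35}\bigr)$ carries the sign consistent with the asserted boundary behaviour, because $q_{0}\ge 34/35$ when $k\ge 23/17$ and $q_{0}\le 4/5<34/35$ when $k\le 1$. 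I expect the one genuinely delicate point to be the sufficiency in (i) for $q<0$: for $k\in[23/17,3)$ and $q<0$ the function $H_{kq,q}$ is, under the paper's criteria, neither increasing nor decreasing, so Corollary \ref{Corollary p=kq} cannot be applied to it directly; the way around this is the two-step device above — prove \eqref{MI4} at the sharp value $q=q_{0}>0$ and then transport it to all $q\le q_{0}$ via the monotonicity of $M(\cosh x;kq,q)$ in $q$.
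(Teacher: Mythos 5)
Your proposal is correct and follows essentially the same route as the paper: reduce \eqref{MI4} and \eqref{MI5} to the sign of $D_{kq,q}$ via Remark \ref{Remark T-M}, get sufficiency from Corollary \ref{Corollary p=kq}, and get necessity from the limits \eqref{Limit1} and \eqref{Limit2p,q>=0} in Lemma \ref{Lemma D_p.q}. The only (harmless) deviation is in the sufficiency of (i) for $q<0$, where the paper observes that $M^{k}(t;kq,q)$ is a weighted power mean of order $q$ and hence bounded by the $q=0$ case already covered, whereas you anchor at the sharp value $q_{0}=8/(5(3-k))$ and transport via the increasing dependence of $M(t;kq,q)$ on $q$ for $k<3$ — both are legitimate monotonicity-in-$q$ arguments for $M$.
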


\begin{proof}
(i) In the case of $k\in \left[ 23/17,3\right) $. As shown previously, we
see that the inequality (\ref{MI4}) is equivalent to $D_{kq,q}\left(
x\right) =Sh_{kq}\left( x\right) -\left( 1/3\right) Ch_{q}\left( x\right) >0$%
. Then, by Corollary \ref{Corollary p=kq}, we see that (\ref{MI4}) holds for 
$0\leq q\leq 8/\left( 5\left( 3-k\right) \right) $. For $q<0$, since $%
M^{k}\left( t;kq,q\right) $ is a weighted power mean of order $q$ of
positive numbers $1$ and $\cosh x$, so we have $M^{k}\left( t;kq,q\right)
<M^{k}\left( t;k\times 0,0\right) $, and then (\ref{MI4}) still holds, which
proves the sufficiency. The necessity can be derived from $%
\lim_{x\rightarrow 0^{+}}x^{-4}D_{kq,q}\left( x\right) \geq 0$, which by \ref%
{Limit1} gives%
\begin{equation*}
\lim_{x\rightarrow 0^{+}}\frac{D_{kq,q}\left( x\right) }{x^{4}}=\frac{1}{72}%
\left( kq-3q+\frac{8}{5}\right) \geq 0.
\end{equation*}%
Solving the inequality for $q$ leads to $q\leq 8/\left( 5\left( 3-k\right)
\right) $.

(ii) In the case of $k\in (0,1]$. The sufficiency follows from Corollary \ref%
{Corollary p=kq}. It remains to prove the necessity. If the second
inequality in (\ref{MI5}) holds for $x\in \left( 0,\infty \right) $, then by %
\ref{Limit1} we have%
\begin{equation*}
\lim_{x\rightarrow 0^{+}}\frac{D_{kq_{2},q_{2}}\left( x\right) }{x^{4}}=%
\frac{1}{72}\left( kq_{2}-3q_{2}+\frac{8}{5}\right) \leq 0,
\end{equation*}%
which implies $q_{2}\geq 8/\left( 5\left( 3-k\right) \right) $. Lastly, we
show that the condition $q_{1}\leq 0$ is necessary for the first inequality
in (\ref{MI5}) to be true. If $q_{1}>0$, then $0<kq_{1}\leq q_{1}$. From (%
\ref{Limit2p,q>=0}) we know that $\lim_{x\rightarrow \infty
}e^{-q_{1}x}D_{kq_{1},q_{1}}\left( x\right) =-2^{-q_{1}}/\left(
3q_{1}\right) <0$, which means that there is an enough large number $x_{N}$
such that $D_{kq_{1},q_{1}}\left( x\right) <0$ for $x>x_{N}$, this
contradict with the fact that $D_{kq_{1},q_{1}}\left( x\right) >0$ for $x\in
\left( 0,\infty \right) $.

This theorem is proved.
\end{proof}

Taking $k=1,3/2,2$ in Theorem \ref{MT4}, we get

\begin{corollary}
\label{Corollary MT4p=kq}

(i) The double inequality%
\begin{equation}
\left( \tfrac{2}{3}+\tfrac{1}{3}\cosh ^{q_{1}}x\right) ^{1/q_{1}}<\frac{%
\sinh x}{x}<\left( \tfrac{2}{3}+\tfrac{1}{3}\cosh ^{q_{2}}x\right) ^{1/q_{2}}
\label{MI5a}
\end{equation}%
holds if and only if $q_{1}\leq 0$ and $q_{2}\geq 4/5$.

(ii) The inequality%
\begin{equation}
\frac{\sinh x}{x}>\left( \tfrac{1}{2}+\tfrac{1}{2}\cosh ^{q}x\right)
^{2/\left( 3q\right) }  \label{MI4a}
\end{equation}%
holds if and only if $q\leq 16/15$.

(iii) The inequality%
\begin{equation}
\frac{\sinh x}{x}>\left( \tfrac{1}{3}+\tfrac{2}{3}\cosh ^{q}x\right)
^{1/\left( 2q\right) }  \label{MI4b}
\end{equation}%
holds if and only if $q\leq 8/5$.
\end{corollary}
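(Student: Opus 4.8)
The plan is to obtain all three statements by specializing Theorem~\ref{MT4} to the values $k=1$, $k=3/2$ and $k=2$, checking in each case that the chosen $k$ lies in the relevant subinterval and that the substitution reproduces exactly the displayed inequalities and thresholds. No fresh argument is needed, since Theorem~\ref{MT4} already supplies both the sufficiency (via Corollary~\ref{Corollary p=kq}) and the necessity (via the limit formulas of Lemma~\ref{Lemma D_p.q}) of each biconditional.

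For part~(i) I would take $k=1$ in Theorem~\ref{MT4}(ii), which is legitimate because $1\in[0,1]$. Then $1-k/3+(k/3)\cosh^{q}x=2/3+(1/3)\cosh^{q}x$, the exponent $1/(kq)$ becomes $1/q$, and the threshold $8/\bigl(5(3-k)\bigr)$ becomes $8/10=4/5$; this turns the double inequality of Theorem~\ref{MT4}(ii) verbatim into \eqref{MI5a}, together with the sharp conditions $q_{1}\le 0$ and $q_{2}\ge 4/5$.

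For parts~(ii) and~(iii) I would apply Theorem~\ref{MT4}(i), first noting $23/17\approx 1.353$, so that both $k=3/2$ and $k=2$ lie in $[23/17,3)$. Taking $k=3/2$ gives $1-k/3+(k/3)\cosh^{q}x=1/2+(1/2)\cosh^{q}x$, exponent $1/(kq)=2/(3q)$, and threshold $8/\bigl(5(3-3/2)\bigr)=16/15$, which is \eqref{MI4a}; taking $k=2$ gives $1-k/3+(k/3)\cosh^{q}x=1/3+(2/3)\cosh^{q}x$, exponent $1/(kq)=1/(2q)$, and threshold $8/\bigl(5(3-2)\bigr)=8/5$, which is \eqref{MI4b}. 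In both cases the ``if and only if'' in Theorem~\ref{MT4}(i) is inherited directly.

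Thus the only content of the proof is the bookkeeping of the three elementary simplifications above and the verification that each value of $k$ falls in the correct range of Theorem~\ref{MT4}; there is no genuine obstacle, as the corollary is a pure specialization.
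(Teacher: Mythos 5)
Your proposal is correct and coincides with the paper's own derivation: the paper obtains the corollary precisely by taking $k=1$ in Theorem~\ref{MT4}(ii) and $k=3/2$, $k=2$ in Theorem~\ref{MT4}(i), with the same arithmetic for the weights, exponents, and thresholds $4/5$, $16/15$, $8/5$. Nothing further is needed.
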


\begin{remark}
Part (i) in corollary above is exactly Theorem Zhu2.
\end{remark}

We close this section by considering the case of $p=3q-8/5$. In this case, $%
\Omega _{p,q}=\{\left( p,q\right) :p\geq 0$ or $3q\leq p\leq 0\}$ is changed
into 
\begin{equation}
\Omega _{3q-8/5,q}=\{3q-8/5\geq 0\text{ or }3q\leq 3q-8/5\leq 0\}=\{q\geq 
\frac{8}{15}\},  \label{Op=3q-8/5}
\end{equation}%
while $M\left( t;p,q\right) $ can be expressed as 
\begin{equation}
M\left( t;3q-\tfrac{8}{5},q\right) =\left\{ 
\begin{array}{ll}
\bigskip \left( \frac{8}{15q}+\left( 1-\frac{8}{15q}\right) t^{q}\right)
^{5/\left( 15q-8\right) } & \text{if }q>\tfrac{8}{15}, \\ 
\exp \frac{5\left( t^{8/15}-1\right) }{8} & \text{if }q=\tfrac{8}{15},%
\end{array}%
\right.  \label{M p=3q-8/5}
\end{equation}%
where $t=\cosh x\in \left( 1,\infty \right) $ for $x>0$. We assert that $%
M\left( t;3q-8/5,q\right) $ is decreasing in $q\in \lbrack 8/15,\infty )$.
Indeed, for $q>8/15$, logarithmic differentiation yields 
\begin{eqnarray*}
\frac{\partial \ln M}{\partial q} &=&-3\frac{\ln \left( \frac{8}{15q}+\left(
1-\frac{8}{15q}\right) t^{q}\right) }{\left( 3q-\frac{8}{5}\right) ^{2}}-%
\frac{\frac{8}{15q^{2}}\left( 1-t^{q}\right) +t^{q}\left( \frac{8}{15q}%
-1\right) \ln t}{\left( \frac{8}{15q}+\left( 1-\frac{8}{15q}\right)
t^{q}\right) \left( 3q-\frac{8}{5}\right) }, \\
\frac{\partial \ln M}{\partial t} &=&5q\frac{t^{q-1}}{\left( 15q-8\right)
t^{q}+8}, \\
\frac{\partial ^{2}\ln M}{\partial q\partial t} &=&40t^{q}\frac{\ln
t^{q}-t^{q}+1}{t\left( \left( 15q-8\right) t^{q}+8\right) ^{2}}<0,
\end{eqnarray*}%
where the inequality holds due to $\ln x\leq x-1$ for $x>0$. Hence, $%
\partial \left( \ln M\right) /\partial q$ is decreasing in $t$, and so we
have%
\begin{equation*}
\frac{\partial \ln M}{\partial q}\left( t;3q-8/5,q\right) <\frac{\partial
\ln M}{\partial q}\left( 1;3q-8/5,q\right) =0,
\end{equation*}%
which means that $q\mapsto M\left( t;3q-8/5,q\right) $ has decreasing
property. Now we show that%
\begin{equation}
\lim_{q\rightarrow \infty }M\left( t;3q-8/5,q\right) =t^{1/3}.
\label{Mlimit}
\end{equation}%
Employing L'Hospital rule yields%
\begin{eqnarray*}
&&\lim_{q\rightarrow \infty }\ln M\left( t;3q-8/5,q\right) \\
&=&5\lim_{q\rightarrow \infty }\frac{\ln \left( \left( 15q-8\right)
t^{q}+8\right) -\ln \left( 15q\right) }{15q-8}=\frac{1}{3}\lim_{q\rightarrow
\infty }\left( \frac{t^{q}\left( 15q\ln t-8\ln t+15\right) }{%
15qt^{q}-8t^{q}+8}-\frac{1}{q}\right) \\
&=&\frac{1}{3}\lim_{q\rightarrow \infty }\left( \frac{15\ln t-8q^{-1}\ln
t+15q^{-1}}{15-8q^{-1}+8q^{-1}t^{-q}}-\frac{1}{q}\right) =\frac{1}{3}\ln t,
\end{eqnarray*}%
that is, (\ref{Mlimit}) is valid.

\begin{theorem}
\label{MT5}Let $x\in (0,\infty )$ and $q>8/15$. Then the inequality%
\begin{equation}
\frac{\sinh x}{x}>\left( \frac{8}{15q}+\left( 1-\frac{8}{15q}\right) \cosh
^{q}x\right) ^{5/\left( 15q-8\right) }  \tag{MI6}  \label{MI6}
\end{equation}%
holds true if and only if $q\geq 34/35$. Its reverse holds if and only if $%
q\leq 4/5$.
\end{theorem}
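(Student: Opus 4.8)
Set $p:=3q-\tfrac{8}{5}$ throughout. Since $q>\tfrac{8}{15}$ we have $p>0$ and, by (\ref{Op=3q-8/5}), $(p,q)\in\Omega_{3q-8/5,q}$, so the right-hand side of (\ref{MI6}) is exactly $M(\cosh x;p,q)$ as in (\ref{M p=3q-8/5}). Its base $\tfrac{8}{15q}+\bigl(1-\tfrac{8}{15q}\bigr)\cosh^{q}x$ is a convex combination of $1$ and $\cosh^{q}x$ (note $1-\tfrac{8}{15q}=\tfrac{p}{3q}>0$), hence exceeds $1$ for $x>0$; so both sides of (\ref{MI6}) are positive, and raising them to the power $p>0$ and using $\tfrac{p}{3q}=1-\tfrac{8}{15q}$, $\tfrac1p=\tfrac{5}{15q-8}$ shows that (\ref{MI6}) is equivalent to $Sh_{p}(x)>\tfrac13 Ch_{q}(x)$, i.e. to $D_{p,q}(x)>0$ on $(0,\infty)$. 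Since $Ch_{q}(x)=U_{q}(\cosh x)>0$ by Lemma \ref{Lemma u_p}, this is in turn equivalent to $H_{p,q}(x)>\tfrac13=H_{p,q}(0^{+})$, while the reverse of (\ref{MI6}) is equivalent to $H_{p,q}(x)<\tfrac13$.

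The sufficiency is then immediate from Corollary \ref{Corollary p=3q-8/5}: for $q\geq\tfrac{34}{35}$ the function $H_{3q-8/5,q}$ is strictly increasing on $(0,\infty)$, so $H_{p,q}(x)>H_{p,q}(0^{+})=\tfrac13$ and (\ref{MI6}) holds; for $\tfrac{8}{15}<q\leq\tfrac45$ it is strictly decreasing, so $H_{p,q}(x)<\tfrac13$ and the reverse of (\ref{MI6}) holds.

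For the necessity I use Lemma \ref{Lemma D_p.q}. If (\ref{MI6}) holds for all $x>0$ then $x^{-6}D_{3q-8/5,q}(x)>0$, and letting $x\to0^{+}$, (\ref{Limit1 p=3q-8/5}) forces $\tfrac{1}{270}\bigl(q-\tfrac{34}{35}\bigr)\geq0$, i.e. $q\geq\tfrac{34}{35}$. If instead the reverse of (\ref{MI6}) holds on $(0,\infty)$, suppose toward a contradiction that $q>\tfrac45$; then $p=3q-\tfrac85>q>0$, so by the first case of (\ref{Limit2p,q>=0}) in Lemma \ref{Lemma D_p.q}(ii) we get $e^{-qx}D_{3q-8/5,q}(x)\to\infty$, hence $D_{3q-8/5,q}(x)>0$ for all large $x$, contradicting $D_{3q-8/5,q}(x)<0$. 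Therefore $q\leq\tfrac45$, which completes the proof.

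The substantive work has all been done already: Corollary \ref{Corollary p=3q-8/5} (resting on Lemma \ref{Lemma f3}) and the series/asymptotic analysis behind Lemma \ref{Lemma D_p.q}; granted those, Theorem \ref{MT5} is a short deduction. The only point that needs care here is the algebraic equivalence between (\ref{MI6}) and $D_{3q-8/5,q}>0$ — keeping track of the exponent $5/(15q-8)$ and of the positivity of the base — together with the observation that the sharp threshold $q\leq\tfrac45$ for the reverse inequality must be extracted from the growth of $D_{3q-8/5,q}$ at $+\infty$ (via $p>q$), since the expansion at the origin only yields the weaker bound $q\leq\tfrac{34}{35}$.
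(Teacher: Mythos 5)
Your proposal is correct and follows essentially the same route as the paper: sufficiency from Corollary \ref{Corollary p=3q-8/5}, the threshold $q\geq 34/35$ from the sixth-order expansion (\ref{Limit1 p=3q-8/5}) at the origin, and the threshold $q\leq 4/5$ from the growth of $D_{3q-8/5,q}$ at infinity via (\ref{Limit2p,q>=0}) with $p>q>0$. Your explicit verification of the algebraic equivalence between (\ref{MI6}) and $D_{3q-8/5,q}>0$ is a welcome bit of care that the paper leaves implicit.
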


\begin{proof}
The sufficiency is obviously a consequence of Corollary \ref{Corollary
p=3q-8/5}. The necessity such that (\ref{MI6}) holds due to $%
\lim_{x\rightarrow 0^{+}}x^{-6}D_{3q-8/5,q}\left( x\right) \geq 0$, which
together with (\ref{Limit1 p=3q-8/5}) yields $q\geq 34/35$. It remains to
treat the necessity such that the reverse of (\ref{MI6}). Due to the
decreasing property of $M\left( \cosh x;3q-8/5,q\right) $, if there is a
more large number $q^{\ast }>4/5$ such that reverse of (\ref{MI6}) holds,
which is equivalent to $D_{3q^{\ast }-8/5,q^{\ast }}\left( x\right) <0$ for $%
x\in \left( 0,\infty \right) $, then $3q^{\ast }-8/5>q^{\ast }>4/5$. From (%
\ref{Limit2p,q>=0}) we get $\lim_{x\rightarrow \infty }e^{-q^{\ast
}x}D_{3q^{\ast }-8/5,q^{\ast }}\left( x\right) =\infty $, which implies that
there is an enough large number $x_{N}$ such that $D_{3q^{\ast }-8/5,q^{\ast
}}\left( x\right) >0$ for $x>x_{N}$. This contradict with the fact that $%
D_{3q^{\ast }-8/5,q^{\ast }}\left( x\right) <0$ for $x\in \left( 0,\infty
\right) $, therefore, the constant $4/5$ is the best.

Thus the proof of this theorem is complete.
\end{proof}

Putting $q=34/35,1,16/15,6/5,8/5,2,\infty $ and $4/5,7/10,2/3,3/5,8/15^{+}$
in Theorem \ref{MT5} we have

\begin{corollary}
For $x\in (0,\infty )$, the chain of inequalities hold:%
\begin{eqnarray*}
\cosh ^{1/3}x &<&\cdot \cdot \cdot <\left( \tfrac{11}{15}\cosh ^{2}x+\tfrac{4%
}{15}\right) ^{5/22}<\left( \tfrac{2}{3}\cosh ^{8/5}x+\frac{1}{3}\right)
^{5/16}< \\
\left( \tfrac{5}{9}\cosh ^{6/5}x+\tfrac{4}{9}\right) ^{1/2} &<&\left( \tfrac{%
1}{2}\cosh ^{16/15}x+\tfrac{1}{2}\right) ^{5/8}<\left( \tfrac{7}{15}\cosh x+%
\tfrac{8}{15}\right) ^{5/7}< \\
\left( \tfrac{23}{51}\cosh ^{34/35}x+\tfrac{28}{51}\right) ^{35/46} &<&\frac{%
\sinh x}{x}<\left( \tfrac{1}{3}\cosh ^{4/5}x+\tfrac{2}{3}\right)
^{5/4}<\left( \tfrac{5}{21}\cosh ^{7/10}x+\tfrac{16}{21}\right) ^{2}< \\
\left( \tfrac{1}{5}\cosh ^{2/3}x+\tfrac{4}{5}\right) ^{5/2} &<&\cdot \cdot
\cdot <\exp \left( \tfrac{5}{8}\cosh ^{8/15}x-\tfrac{5}{8}\right) .
\end{eqnarray*}
\end{corollary}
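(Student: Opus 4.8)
The plan is to recognize every term of the chain as a value $M_q := M(\cosh x; 3q - 8/5, q)$ of the function in (\ref{M p=3q-8/5}) for a suitable $q$, and then to assemble the chain from three facts already in hand: Theorem \ref{MT5}, the strict monotonicity of $q \mapsto M_q$, and the two limiting values.

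First I would confirm the substitutions. Plugging $q = 2$ into (\ref{M p=3q-8/5}) gives $8/(15q) = 4/15$, $1 - 8/(15q) = 11/15$ and $5/(15q - 8) = 5/22$, so $(\tfrac{11}{15}\cosh^2 x + \tfrac{4}{15})^{5/22} = M_2$; the same routine computation identifies the remaining displayed terms with $q = 8/5, 6/5, 16/15, 1, 34/35$ on the left of $\sinh x / x$ and with $q = 4/5, 7/10, 2/3$ on the right (the value $q = 3/5$ sits inside the right-hand ellipsis). The two outermost terms are limits: by (\ref{Mlimit}) one has $\cosh^{1/3} x = \lim_{q \to \infty} M_q$, while the rightmost $\exp(\tfrac{5}{8}\cosh^{8/15} x - \tfrac{5}{8})$ is the $q = 8/15$ branch of (\ref{M p=3q-8/5}), equivalently $\lim_{q \to 8/15^+} M_q$.

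Next I would order each half using the strict monotonicity established just before Theorem \ref{MT5}: for every fixed $x > 0$, the map $q \mapsto M_q$ is strictly decreasing on $[8/15, \infty)$. On the left the parameters in decreasing order are $\infty > 2 > 8/5 > 6/5 > 16/15 > 1 > 34/35$, so the corresponding values $M_q$ increase, giving $\cosh^{1/3} x < \cdots < M_2 < M_{8/5} < M_{6/5} < M_{16/15} < M_1 < M_{34/35}$, the ellipsis absorbing the continuum of intermediate $q \in (2, \infty)$. On the right the parameters decrease through $4/5 > 7/10 > 2/3 > 3/5 > 8/15^+$, again producing increasing values $M_{4/5} < M_{7/10} < M_{2/3} < \cdots < \exp(\tfrac{5}{8}\cosh^{8/15} x - \tfrac{5}{8})$.

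Finally I would splice the two halves at $\sinh x / x$ via Theorem \ref{MT5}: taking the boundary case $q = 34/35$ gives $M_{34/35} < \sinh x / x$, which caps the left chain, and taking the boundary case $q = 4/5$ gives $\sinh x / x < M_{4/5}$, which opens the right chain. Concatenating the left ordering, this central comparison, and the right ordering yields the full chain. There is no genuine obstacle: the only care needed is the bookkeeping that converts ``$M_q$ decreasing in $q$'' into the correct left-to-right arrangement, together with the observation that the sharp constants $34/35$ and $4/5$ of Theorem \ref{MT5} are exactly the innermost parameters of the chain, so the two central strict inequalities are preserved.
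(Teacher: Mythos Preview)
Your proposal is correct and follows exactly the approach the paper intends: the corollary is stated immediately after the sentence ``Putting $q=34/35,1,16/15,6/5,8/5,2,\infty$ and $4/5,7/10,2/3,3/5,8/15^{+}$ in Theorem \ref{MT5} we have,'' and the chain is obtained precisely by combining Theorem \ref{MT5} for the two central inequalities with the strict decrease of $q\mapsto M(\cosh x;3q-8/5,q)$ (proved just before the theorem) and the limit (\ref{Mlimit}) for the outer $\cosh^{1/3}x$ term. Your write-up in fact makes explicit what the paper leaves tacit, namely that the ordering among the $M_q$'s comes from the monotonicity in $q$ rather than from Theorem \ref{MT5} itself.
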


\section{Inequalities for means}

Let $G,A,Q$ and $L$ stand for the geometric, arithmetic, quadratic and
logarithmic means of any positive numbers $a$ and $b$ defined by%
\begin{eqnarray*}
G &=&G\left( a,b\right) =\sqrt{ab}\text{, \ }A=A\left( a,b\right) =\frac{a+b%
}{2}\text{, \ }Q=Q\left( a,b\right) =\sqrt{\frac{a^{2}+b^{2}}{2}}, \\
L &=&L\left( a,b\right) =\frac{a-b}{\ln a-\ln b}\text{ if }a\neq b\text{ \
and \ }L=L\left( a,a\right) =a.
\end{eqnarray*}%
The Schwab-Borchardt mean of two numbers $a\geq 0$ and $b>0$, denoted by $%
SB(a,b)$, is defined as \cite[Theorem 8.4]{Biernacki.9.1955}, \cite[3, (2.3)]%
{Carlson.78(1971)}%
\begin{equation*}
SB(a,b)=\left\{ 
\begin{array}{cc}
\frac{\sqrt{b^{2}-a^{2}}}{\arccos (a/b)} & \text{if\ }a<b, \\ 
a & \text{if \ }a=b, \\ 
\frac{\sqrt{a^{2}-b^{2}}}{\func{arccosh}(a/b)} & \text{if \ }a>b.%
\end{array}%
\right.
\end{equation*}%
The properties and certain inequalities involving Schwab-Borchardt mean can
be found in \cite{Neuman.14(2003)}, \cite{Neuman.JMI.2012.inprint}. Very
recently, Yang \cite[Theorem 7.1]{Yang.JIA.2013.541} has defined a family of
two-parameter hyperbolic sine means as follows.

\begin{definition}
Let $p,q\in \mathbb{R}$ and let $Sh(p,q,t)$ be defined by%
\begin{equation}
Sh\left( p,q,t\right) =\left\{ 
\begin{array}{ll}
\left( \dfrac{q}{p}\dfrac{\sinh pt}{\sinh qt}\right) ^{1/\left( p-q\right) }
& \text{if }pq\left( p-q\right) \neq 0, \\ 
\left( \dfrac{\sinh pt}{pt}\right) ^{1/p} & \text{if }p\neq 0,q=0, \\ 
\left( \dfrac{\sinh qt}{qt}\right) ^{1/q} & \text{if }p=0,q\neq 0, \\ 
e^{t\coth pt-1/p} & \text{if }p=q,pq\neq 0, \\ 
1 & \text{if }p=q=0.%
\end{array}%
\right.  \label{Sh(p,q,t)}
\end{equation}%
Then for all $b\geq a>0$, $Sh_{p,q}\left( b,a\right) $ defined by%
\begin{equation}
Sh_{p,q}\left( b,a\right) =a\times Sh\left( p,q,\func{arccosh}\left(
b/a\right) \right) \text{ if }a<b\text{ \ and \ }Sh_{p,q}\left( a,a\right) =a
\label{TShmean}
\end{equation}%
is a mean of $a$ and $b$ if $\left( p,q\right) $ satisfies%
\begin{equation*}
\begin{array}{ll}
p+q\leq 3\text{ \ and \ }L\left( p,q\right) \leq \frac{1}{\ln 2}, & \text{if 
}p,q>0, \\ 
0\leq p+q\leq 3, & \text{otherwise,}%
\end{array}%
\end{equation*}%
where $L\left( p,q\right) $ is the logarithmic mean of positive numbers $p$
and $q$.
\end{definition}

As as a special case, for $b\geq a>0$, 
\begin{equation*}
Sh_{1,0}\left( b,a\right) =a\frac{\sinh t}{t}\Big |_{t=\func{arccosh}\left(
b/a\right) }
\end{equation*}%
is a mean of $a$ and $b$. Clearly, $Sh_{1,0}\left( b,a\right) =SB\left(
b,a\right) $. Thus, after replacing $t$ by $\func{arccosh}\left( b/a\right) $
and multiplying each sides of those inequalities showed in previous section
by $a$, Theorems \ref{MT2}--\ref{MT5} still hold, for example, Theorems \ref%
{MT2}--\ref{MT5} can be restated as follows.

\begin{theorem}
\label{MT2a}Let $b\geq a>0$ and $\left( p,q\right) \in \Omega
_{p,q}=\{\left( p,q\right) :p\geq 0$ or $3q\leq p\leq 0\}$. Then

(i) when $q\in \left[ 34/35,\infty \right) $, the double inequality%
\begin{equation}
\left( \left( 1-\tfrac{p_{1}}{3q}\right) a^{q}+\tfrac{p_{1}}{3q}b^{q}\right)
^{1/p_{1}}a^{1-q/p_{1}}<SB\left( b,a\right) <\left( \left( 1-\tfrac{p_{2}}{3q%
}\right) a^{q}+\tfrac{p_{2}}{3q}b^{q}\right) ^{1/p_{2}}a^{1-q/p_{2}} 
\tag{MI2`}  \label{MI2`}
\end{equation}%
holds if and only if $p_{1}\geq 3q-8/5$ and $p_{2}\leq q$;

(ii) when $q\in \left[ 4/5,34/35\right) $, the double inequality (\ref{MI2`}%
) holds for $p_{1}\geq 23q/17$ and if and only if $p_{2}\leq q$;

(iii) when $q\in (0,4/5)$, the double inequality (\ref{MI2`}) holds for $%
p_{1}\geq 23q/17$ and if and only if $p_{2}\leq 3q-8/5$;

(iv) when $q\in (-\infty ,0]$, the double inequality (\ref{MI2`}) holds if
and only if $p_{1}\geq q$ and $p_{2}\leq 3q-8/5$.
\end{theorem}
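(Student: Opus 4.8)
The plan is to derive Theorem \ref{MT2a} directly from Theorem \ref{MT2} by the substitution that was already announced in the text, namely replacing $x$ by $\func{arccosh}(b/a)$ and then multiplying through by $a$. First I would recall that for $b\geq a>0$ and $t=\func{arccosh}(b/a)$ one has $\cosh t=b/a$ and $\sinh t=\sqrt{(b/a)^2-1}/1$, so that $(\sinh t)/t = SB(b,a)/a$ by the very definition of the Schwab--Borchardt mean in the case $a<b$ (and trivially both sides equal $1$ when $a=b$). Thus $Sh_{1,0}(b,a)=a\,(\sinh t)/t=SB(b,a)$, as noted just before the statement.

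Next I would take the double inequality \eqref{MI2*} from the remark following Theorem \ref{MT3} (which, together with the hypothesis $(p,q)\in\Omega_{p,q}$, is the form of Theorem \ref{MT2} expressed through the mean $M$): for the relevant ranges of $q$ and $p_1,p_2$,
\begin{equation*}
\left(1-\tfrac{p_1}{3q}+\tfrac{p_1}{3q}\cosh^q x\right)^{1/p_1}<\frac{\sinh x}{x}<\left(1-\tfrac{p_2}{3q}+\tfrac{p_2}{3q}\cosh^q x\right)^{1/p_2}.
\end{equation*}
Setting $x=t=\func{arccosh}(b/a)$ turns $\cosh^q x$ into $(b/a)^q=b^q/a^q$, so the left member becomes $\bigl(1-\tfrac{p_1}{3q}+\tfrac{p_1}{3q}b^q/a^q\bigr)^{1/p_1}=\bigl((1-\tfrac{p_1}{3q})a^q+\tfrac{p_1}{3q}b^q\bigr)^{1/p_1}a^{-q/p_1}$, and similarly for the right member. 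Multiplying the whole chain by $a>0$ (which preserves the inequalities) and using $a\cdot(\sinh t)/t=SB(b,a)$ yields exactly \eqref{MI2`}. The four cases (i)--(iv) correspond one-to-one to the four cases of Theorem \ref{MT2}, with the same conditions on $p_1,p_2$.

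For the "if and only if" part, the sufficiency is immediate from the above, since Theorem \ref{MT2} already provides the inequalities under those hypotheses. For necessity, I would observe that the substitution $x\mapsto \func{arccosh}(b/a)$ is a bijection from $x\in(0,\infty)$ onto the pairs with $b>a>0$ (for fixed $a$, say), so that \eqref{MI2`} holding for all $b\geq a>0$ is equivalent to \eqref{MI2*} holding for all $x\in(0,\infty)$; hence the necessity of $p_1\geq 3q-8/5$, $p_2\leq q$, etc., transfers verbatim from Theorem \ref{MT2}. The only minor point requiring care—and the closest thing to an obstacle—is bookkeeping: checking that the algebraic rearrangement of the weighted-power-mean expression is valid across the sign conventions for $q$ and for $p_i$ (in particular when $q<0$ or $p_i<0$), and confirming that $(p,q)\in\Omega_{p,q}$ is exactly the condition ensuring $M(\cosh x;p,q)$, and hence the corresponding expression in $a,b$, is real-valued; both of these are settled by \eqref{Opq} and the discussion in Remark \ref{Remark T-M}, so no new estimates are needed. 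The analogous restatements of Theorems \ref{MT3}--\ref{MT5} follow in precisely the same manner.
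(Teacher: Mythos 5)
Your proposal is correct and follows essentially the same route as the paper: the paper obtains Theorem \ref{MT2a} precisely by substituting $x=\func{arccosh}(b/a)$ into Theorem \ref{MT2} (in the form (\ref{MI2*})), using $a\,(\sinh x)/x=SB(b,a)$, and multiplying through by $a$. Your additional remarks on the necessity direction (the substitution being a bijection onto $(0,\infty)$) and on the role of $\Omega_{p,q}$ only make explicit what the paper leaves implicit.
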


\begin{theorem}
\label{MT3a}Let $b\geq a>0$ and $\left( p,q\right) \in \Omega
_{p,q}=\{\left( p,q\right) :p\geq 0$ or $3q\leq p\leq 0\}$. Then

(i) when $p\in \left[ 46/35,\infty \right) $, the double inequality%
\begin{equation}
\left( \left( 1-\tfrac{p}{3q_{1}}\right) a^{q_{1}}+\tfrac{p}{3q_{1}}%
b^{q_{1}}\right) ^{1/p}a^{1-q_{1}/p}<SB\left( b,a\right) <\left( \left( 1-%
\tfrac{p}{3q_{2}}\right) a^{q_{2}}+\tfrac{p}{3q_{2}}b^{q_{2}}\right)
^{1/p}a^{1-q_{2}/p}  \tag{MI3`}  \label{MI3`}
\end{equation}%
holds if and only if $q_{1}\leq p/3+8/15$ and $q_{2}\geq p$;

(ii) when $p\in \left[ 4/5,46/35\right) $, the double inequality (\ref{MI3`}%
) holds for $q_{1}\leq 17p/23$ and if and only if $q_{2}\geq p$;

(iii) when $p\in (0,4/5)$, the double inequality (\ref{MI3`}) holds for $%
q_{1}\leq 17p/23$ and if and only if $q_{2}\geq p/3+8/15$;

(iv) when $p\in (-\infty ,0]$, the double inequality (\ref{MI3`}) holds if
and only if $q_{1}\leq p$ and $q_{2}\geq p/3+8/15$.
\end{theorem}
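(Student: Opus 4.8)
The plan is to reduce this theorem to Theorem \ref{MT3}, which has already been established, by means of the substitution $t=\operatorname{arccosh}(b/a)$ and a rescaling by the factor $a$. First I would observe that since $b\geq a>0$, the quantity $t:=\operatorname{arccosh}(b/a)$ is a well-defined nonnegative real number, with $t=0$ precisely when $a=b$; in the latter case all three members of \eqref{MI3`} collapse to $a$ (using $SB(a,a)=a$ and the fact that $M(1;p,q)=1$ from Remark \ref{Remark T-M}), so the inequalities hold trivially and we may assume $t\in(0,\infty)$. Under this substitution one has $\cosh t=b/a$, hence $\cosh^{q_i}t=(b/a)^{q_i}$, and $SB(b,a)=a\cdot(\sinh t)/t$ by the identification $Sh_{1,0}(b,a)=SB(b,a)$ recorded just before the statement.

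Next I would take the double inequality \eqref{MI3*} from the corresponding remark after Theorem \ref{MT3} — which, under the hypothesis $(p,q)\in\Omega_{p,q}$, is equivalent to \eqref{MI3} — evaluated at this value of $t$:
\begin{equation*}
\left(1-\tfrac{p}{3q_1}+\tfrac{p}{3q_1}(b/a)^{q_1}\right)^{1/p}<\frac{\sinh t}{t}<\left(1-\tfrac{p}{3q_2}+\tfrac{p}{3q_2}(b/a)^{q_2}\right)^{1/p}.
\end{equation*}
Multiplying through by $a>0$ (which preserves the inequalities) and absorbing the factor $a$ into the bracketed $p$-th roots via $a=a^{q_i/p}\cdot a^{1-q_i/p}$, one rewrites $a\cdot\left(1-\tfrac{p}{3q_i}+\tfrac{p}{3q_i}(b/a)^{q_i}\right)^{1/p}=\left(\left(1-\tfrac{p}{3q_i}\right)a^{q_i}+\tfrac{p}{3q_i}b^{q_i}\right)^{1/p}a^{1-q_i/p}$, which is exactly the form appearing in \eqref{MI3`}. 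This shows the equivalence of \eqref{MI3`} with \eqref{MI3} term by term, so the four cases (i)--(iv) transfer verbatim from Theorem \ref{MT3}, including the "if and only if" assertions: any choice of $q_1,q_2$ making \eqref{MI3} hold for all $x\in(0,\infty)$ makes \eqref{MI3`} hold for all $b\geq a>0$ and conversely, since $t$ ranges over all of $(0,\infty)$ as $b/a$ ranges over $(1,\infty)$.

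I do not expect a serious obstacle here — the argument is essentially bookkeeping. The one point requiring a little care is the direction of the "only if" (necessity) claims: one must check that as the pair $(a,b)$ with $b\geq a>0$ varies, the induced variable $t=\operatorname{arccosh}(b/a)$ genuinely sweeps out the whole interval $(0,\infty)$, so that failure of \eqref{MI3} at some $x_0$ translates into failure of \eqref{MI3`} at the pair $(a,b)=(1,\cosh x_0)$; this is immediate from continuity and surjectivity of $\operatorname{arccosh}$ on $[1,\infty)$. A secondary cosmetic point is to confirm that the membership condition $(p,q_i)\in\Omega_{p,q}$ required to pass between \eqref{MI3} and \eqref{MI3*} is compatible with the ranges of $q_1,q_2$ asserted in each of (i)--(iv); this was already arranged in the statement's hypothesis $(p,q)\in\Omega_{p,q}$ together with Remark \ref{Remark T-M}, so no new work is needed.
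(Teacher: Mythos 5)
Your argument is exactly the paper's: the author derives Theorem \ref{MT3a} from Theorem \ref{MT3} (in the form \eqref{MI3*}) by substituting $t=\operatorname{arccosh}(b/a)$, using $SB(b,a)=Sh_{1,0}(b,a)=a\,(\sinh t)/t$, and multiplying through by $a$, which is precisely your rescaling $a=a^{q_i/p}a^{1-q_i/p}$. Your write-up just makes explicit the two points the paper leaves implicit (the trivial case $a=b$ and the surjectivity of $t=\operatorname{arccosh}(b/a)$ onto $(0,\infty)$ needed for the necessity claims), so it is correct and essentially identical in approach.
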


\begin{theorem}
\label{MT4a}Let $b\geq a>0$ and $k\in \lbrack 0,3)$. Then

(i) when $k\in \left[ 23/17,3\right) $, the inequality%
\begin{equation}
SB\left( b,a\right) >\left( \left( 1-\tfrac{k}{3}\right) a^{q}+\tfrac{k}{3}%
b^{q}\right) ^{1/\left( kq\right) }a^{1-1/k}  \tag{MI4`}
\end{equation}%
holds if and only if $q\leq 8/\left( 5\left( 3-k\right) \right) $;

(ii) when $k\in \lbrack 0,1]$, the double inequality%
\begin{equation}
\left( \left( 1-\tfrac{k}{3}\right) a^{q_{1}}+\tfrac{k}{3}b^{q_{1}}\right)
^{1/\left( kq_{1}\right) }a^{1-1/k}<SB\left( b,a\right) <\left( \left( 1-%
\tfrac{k}{3}\right) a^{q_{2}}+\tfrac{k}{3}b^{q_{2}}\right) ^{1/\left(
kq_{2}\right) }a^{1-1/k}  \tag{MI5`}
\end{equation}%
holds if and only if $q_{1}\leq 0$ and $q_{2}\geq 8/\left( 5\left(
3-k\right) \right) $.
\end{theorem}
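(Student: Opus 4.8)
The plan is to deduce Theorem \ref{MT4a} from Theorem \ref{MT4} (which gives the hyperbolic-function version) by the substitution $t = \func{arccosh}(b/a)$, exactly as outlined in the paragraph preceding Theorem \ref{MT2a}. First I would note that for $b \geq a > 0$, the substitution $t = \func{arccosh}(b/a) \in [0,\infty)$ is a bijection onto $[0,\infty)$, under which $\cosh t = b/a$ and $\sinh t = \sqrt{(b/a)^2 - 1}$, so that $Sh_{1,0}(b,a) = a\,(\sinh t)/t = SB(b,a)$ by the identification recorded after the Definition. Thus the inequality $(\sinh t)/t > M(\cosh t; kq, q)$ from Theorem \ref{MT4}, after multiplying both sides by $a > 0$, becomes $SB(b,a) > a\,M(b/a; kq, q)$.

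The second step is the purely algebraic identification of $a\,M(b/a; kq, q)$ with the expressions appearing in (MI4`) and (MI5`). Using the formula (\ref{M p=kq}) for $M(t;kq,q)$ with $t = b/a$, for $kq \neq 0$ one has
\begin{equation*}
a\,M(b/a;kq,q) = a\left(1 - \tfrac{k}{3} + \tfrac{k}{3}(b/a)^{q}\right)^{1/(kq)} = a \cdot a^{-1/k}\left((1-\tfrac{k}{3})a^{q} + \tfrac{k}{3}b^{q}\right)^{1/(kq)},
\end{equation*}
since $\left((b/a)^q\right)^{1/(kq)}$ contributes the factor pulling $a^{-1/k}$ out, giving exactly the right-hand side $\left((1-\tfrac{k}{3})a^{q} + \tfrac{k}{3}b^{q}\right)^{1/(kq)}a^{1-1/k}$ of (MI4`). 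The case $q = 0$ is handled by the limiting value $M(t;0,0) = t^{1/3}$, giving $a\,(b/a)^{1/3} = a^{2/3}b^{1/3}$, which is the $q\to 0$ limit of the displayed expression; I would remark that the formula is continuous in $q$ so no separate argument is needed beyond citing Theorem \ref{MT4}. The same computation applied to both bounds in (MI5) yields (MI5`).

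The final step is to transfer the sharpness (the "if and only if" clauses). Since the map $t \mapsto \func{arccosh}(b/a)$ ranges over all of $(0,\infty)$ as $b/a$ ranges over $(1,\infty)$, an inequality in $b,a$ of the stated form holds for all $b \geq a > 0$ if and only if the corresponding inequality in $t$ holds for all $t \in (0,\infty)$; hence the necessity and sufficiency conditions on $q$, $q_1$, $q_2$ carry over verbatim from Theorem \ref{MT4}. I do not expect any genuine obstacle here: the content is entirely in Theorem \ref{MT4}, and this theorem is a translation. The only point requiring a little care is bookkeeping the exponent algebra so that the factor $a^{1-1/k}$ comes out correctly (and checking that the condition $k \in [0,3)$ keeps $M(b/a;kq,q)$ real and the weights $1-k/3,\ k/3$ of the right sign), together with noting that part (i)'s one-sided inequality and part (ii)'s two-sided inequality correspond respectively to parts (i) and (ii) of Theorem \ref{MT4}.
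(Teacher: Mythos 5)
Your proposal is correct and matches the paper's approach exactly: the paper likewise obtains Theorem \ref{MT4a} from Theorem \ref{MT4} by the substitution $t=\operatorname{arccosh}(b/a)$ and multiplication by $a$, using $Sh_{1,0}(b,a)=SB(b,a)$, and your exponent bookkeeping producing the factor $a^{1-1/k}$ is right. The paper does not even write this out as a separate proof, so your version is if anything slightly more detailed.
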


\begin{theorem}
\label{MT5a}Let $b\geq a>0$ and $q>8/15$. Then the inequality%
\begin{equation}
SB\left( b,a\right) >\left( \frac{8}{15q}a^{q}+\left( 1-\frac{8}{15q}\right)
b^{q}\right) ^{5/\left( 15q-8\right) }a^{-q/\left( 15q-8\right) }  \tag{MI6'}
\end{equation}%
holds true if and only if $q\geq 34/35$. Its reverse holds if and only if $%
q\leq 4/5$.
\end{theorem}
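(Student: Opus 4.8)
The plan is to deduce Theorem~\ref{MT5a} from Theorem~\ref{MT5} by the substitution that exhibits the Schwab--Borchardt mean as a value of the one-parameter hyperbolic sine mean $Sh_{1,0}$, together with the homogeneity of $SB$. First I would record the identity noted just before the statement: for $b\geq a>0$,
\[
SB(b,a)=Sh_{1,0}(b,a)=a\,\frac{\sinh t}{t}\Big|_{t=\func{arccosh}(b/a)},
\]
and $SB(a,a)=a$. Fixing $a>0$, the map $b\mapsto x:=\func{arccosh}(b/a)$ is an increasing bijection of $[a,\infty)$ onto $[0,\infty)$, under which $\cosh x=b/a$ and $(\sinh x)/x=SB(b,a)/a$.

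Next I would invoke Theorem~\ref{MT5}: for a fixed $q>8/15$, the inequality $(\sinh x)/x>M(\cosh x;3q-8/5,q)$, with $M(t;3q-8/5,q)$ as in (\ref{M p=3q-8/5}), holds for all $x\in(0,\infty)$ if and only if $q\geq 34/35$, and its reverse holds for all $x\in(0,\infty)$ if and only if $q\leq 4/5$. Substituting $\cosh x=b/a$ into $M$, extracting the factor $a^{-q}$ from the bracket, raising to the power $5/(15q-8)=1/(3q-8/5)$, and finally multiplying through by $a>0$ (which preserves the sense of the inequality), I would arrive at the inequality of Theorem~\ref{MT5a}; the case $b=a$ reduces to the trivial identity $a=a$. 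The only bookkeeping here is to collect correctly the powers of $a$ standing in front of the bracket, which is routine.

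The point needing care --- indeed the only mildly delicate step --- is that the two ``if and only if'' clauses transfer. Since, for each fixed $a>0$, the map $b\mapsto\func{arccosh}(b/a)$ is a bijection of $[a,\infty)$ onto $[0,\infty)$, the assertion ``the inequality of Theorem~\ref{MT5a} holds for all $b\geq a>0$'' is, for each fixed $q>8/15$, \emph{literally} equivalent to ``the inequality of Theorem~\ref{MT5} holds for all $x>0$''; hence the sharp thresholds $q\geq 34/35$ and $q\leq 4/5$ pass over verbatim and remain best possible. Alternatively, the necessity can be argued self-containedly on $D_{3q-8/5,q}$ using the expansions (\ref{Limit1 p=3q-8/5}) and (\ref{Limit2p,q>=0}) of Lemma~\ref{Lemma D_p.q}, exactly as in the proof of Theorem~\ref{MT5}: the sign of $\lim_{x\to0^{+}}x^{-6}D_{3q-8/5,q}(x)=\tfrac{1}{270}(q-\tfrac{34}{35})$ forces $q\geq 34/35$ for (\ref{MI6}), while for $q>4/5$ one has $3q-8/5>q>0$, so $\lim_{x\to\infty}e^{-qx}D_{3q-8/5,q}(x)=\infty$ and the reverse of (\ref{MI6}) cannot hold throughout $(0,\infty)$. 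There is no genuinely new obstacle: all the analytic content already resides in Theorem~\ref{MT5}, hence in Corollary~\ref{Corollary p=3q-8/5} and ultimately in Lemma~\ref{Lemma f3}.
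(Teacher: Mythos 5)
Your proposal is correct and is precisely the paper's own route: the author obtains Theorem~\ref{MT5a} from Theorem~\ref{MT5} by putting $t=\operatorname{arccosh}(b/a)$ in $SB(b,a)=Sh_{1,0}(b,a)=a\,(\sinh t)/t$ and multiplying through by $a$, the two sharp thresholds transferring because $b\mapsto\operatorname{arccosh}(b/a)$ is a bijection of $(a,\infty)$ onto $(0,\infty)$. One remark on the step you call routine bookkeeping: carrying out the substitution actually yields the prefactor $a^{(10q-8)/(15q-8)}$ (the unique exponent making the right-hand side homogeneous of degree one, as a mean must be), not the $a^{-q/(15q-8)}$ printed in (MI6'), so the displayed statement contains a typo which your computation, done carefully, would expose and correct.
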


Further, let $m=m(a,b)$ and $M=M(a,b)$ be two means of $a$ and $b$ with $%
m(a,b)<M(a,b)$ for all $a,b>0$. Clearly, making a change of variables $%
a\rightarrow m\left( a,b\right) $ and $b\rightarrow M(a,b)$, $Sh_{p,q}\left(
M,m\right) $ is still a mean of $a$ and $b$ which lie in $m$ and $M$.
Particularly, taking $\left( M,m\right) =\left( A,G\right) $, $\left(
Q,A\right) $, $\left( Q,G\right) $, respectively, we can obtain new
symmetric means as follows:%
\begin{eqnarray*}
Sh_{1,0}\left( A,G\right)  &=&SB\left( A,G\right) =\frac{a-b}{\ln a-\ln b}%
=L\left( a,b\right) , \\
Sh_{1,0}\left( Q,A\right)  &=&SB\left( Q,A\right) =\frac{a-b}{2\func{arcsinh}%
\frac{a-b}{a+b}}=NS\left( a,b\right) , \\
Sh_{1,0}\left( Q,G\right)  &=&SB\left( Q,G\right) =\frac{a-b}{\sqrt{2}\func{%
arcsinh}\frac{a-b}{\sqrt{2ab}}}=V\left( a,b\right) ,
\end{eqnarray*}%
where $NS\left( a,b\right) $ is Neuman-S\'{a}ndor mean first given by \cite%
{Neuman.14(2003)}, $V\left( a,b\right) $ is a new mean first appeared in 
\cite{Yang.JIA.2013.541}.

Thus, after replacing $\left( b,a,SB\left( b,a\right) \right) $ with $\left(
A,G,L\right) $, $\left( Q,A,NS\right) $, $\left( Q,G,V\right) $, Theorems %
\ref{MT2a}--\ref{MT4a} are still true.

\end{document}